\numberwithin{equation}{section}
\theoremstyle{plain}
\newtheorem{theorem}{Theorem}[section]
\newtheorem{corollary}[theorem]{Corollary}
\newtheorem{lemma}[theorem]{Lemma}
\newtheorem{proposition}[theorem]{Proposition}
\theoremstyle{definition}
\newtheorem{definition}[theorem]{Definition}
\theoremstyle{remark}
\newtheorem{assumption}[theorem]{Assumption}
\newtheorem{remark}[theorem]{Remark}
\newcommand{\lmt}[2]{\mathop{\lim}_{{#1} \rightarrow {#2}} }
\newcommand{\lip}[1]{{\mathrm{lip}}({#1})}
\newcommand{\loclip}[2]{{\mathrm{lip}}_{#2}({#1})}
\newcommand{\esup}[1]{{\mathrm{ess~sup}}{#1}}
\newcommand{\lmts}[2]{\mathop{\overline{\lim}}_{{#1} \rightarrow {#2}} }
\newcommand{\Ric}{{\rm{Ricci}}}
\newcommand{\bRicn}{{\bf Ricci}_N}
\newcommand{\tr}{{\rm{tr}}}
\renewcommand{\H}{{\mathrm{Hess}}}
\newcommand{\mm}{\mathfrak m}
\newcommand{\mmae}{\mathfrak m \text{-a.e.}}
\newcommand{\ms}{(X,\d,\mm)}
\newcommand{\cd}{{\rm CD}^*(K, \infty)}
\newcommand{\cdkn}{{\rm CD}(K, N)}
\newcommand{\rcdkn}{{\rm RCD}^*(K, N)}
\newcommand{\rcd}{{\rm RCD}(K, \infty)}
\newcommand{\R}{\mathbb{R}}
\newcommand{\Id}{{\rm Id}}
\newcommand{\supp}{\mathop{\rm supp}\nolimits}   %%\newcommand{\span}{\mathop{\rm span}\nolimits}   %
\newcommand{\Lip}{\mathop{\rm Lip}\nolimits}
\renewcommand{\d}{{\mathrm d}}
\newcommand{\D}{{\mathrm D}}
\newcommand{\restr}[1]{\lower3pt\hbox{$|_{#1}$}}
\newcommand{\la}{{\langle}}
\newcommand{\ra}{{\rangle}}
\newcommand{\nchi}{{\raise.3ex\hbox{$\chi$}}}
\title{\large{\bf  Conformal transformation on metric measure spaces}
}
\begin{document}
\author{Bang-Xian Han \thanks
{Institute for Applied Mathematics, University of Bonn,  han@iam.uni-bonn.de}
}

\date{\today}
\maketitle

\begin{abstract}
We  study several problems concerning conformal transformation on metric measure spaces, including the Sobolev space, the differential structure and the curvature-dimension condition under conformal transformations.   This is the first result about preservation of lower curvature bounds under perturbation, which is new even on Alexandrov spaces.
\end{abstract}

\textbf{Keywords}: curvature-dimension condition, Bakry-\'Emery theory, conformal transformation,  Ricci tensor, metric measure space.\\

\textbf{MSC codes}: 31E05, 53C23, 51F99, 30L99

\tableofcontents

\section{Introduction}
Let $(M, g, {\rm Vol}_g)$ be a $n$-dimensional Riemannian manifold, $\Ric(\cdot, \cdot)$ be the Ricci curvature tensor on it, and let $w$ be a smooth function on $M$. Then  the corresponding Riemannian manifold under conformal transformation is defined by $(M, e^{2w}g, e^{nw}{\rm Vol}_g)$. We know that this transformation preserves the angle between tangent vectors,  and the following formula holds (see Theorem 1.159, \cite{B-E}).
\begin{equation}\label{eq:conformal}
\Ric'=\Ric-(n-2)(\D \d w -\d w \otimes \d w)+(-\Delta w-(n-2) |\d w|^2) g,
\end{equation}
where $\D \d w$ is the Hessian of $w$ and $\Delta$ is the Laplace-Beltrami operator on $(M, g)$. In particular, this formula can be used to study the lower Ricci curvature bound under conformal transformation.

The conformal transformation defined as above plays an important role in differential geometry, and has potential applications in non-smooth setting.  Similar to the construction of cone, sphere and warped product (see \cite{K-C} and \cite{GH-W}),  conformal transformation on metric measure space can be defined in an intrinsic way.

Let $w, v$ be  bounded continuous functions on a metric measure space $\ms$. We can build a new metric measure space $M':=(X, \d',\mm')$ where
\begin{itemize}
\item [i)]  weighted measure $\mm'$ is defined by:
\[
\mm'=e^{v}\,\mm,
\]
\item [ii)] weighted metric $\d'$  is defined by:
\[
\d'(x, y)=\mathop{\inf}_{\gamma} \Big \{ \int_0^1 |\dot{\gamma_t}|e^{w(\gamma_t)}\,\d t: \gamma \in {\rm AC}([0,1],X), \gamma_0=x, \gamma_1=y \Big \}.
\]
\end{itemize}

On metric measure spaces, the notion of synthetic Ricci curvature bounds, or non-smooth curvature-dimension conditions, is proposed by Lott-Sturm-Villani (see \cite{Lott-Villani09} and \cite{S-O1} for ${\rm CD}(K, \infty)$ and $\cdkn$ conditions) and Bacher-Sturm (see \cite{BS-L} for ${\rm CD}^*(K, N)$ condition). More recently, based on some new results about the Sobolev spaces on metric measure space (see \cite{AGS-C}),  $\rcd$ and $\rcdkn$ conditions, which are refinements of curvature-dimension conditions,  are proposed by Ambrosio-Gigli-Savar\'e (see \cite{AGS-M} and \cite{AGMR-R}). Moreover, the non-smooth Bakry-\'Emery theory, which offers  equivalent characterization of
  $\rcd$ and $\rcdkn$ conditions, has been studied in  \cite{AGS-B}, \cite{AMS-N} and \cite{EKS-O}. These Riemannian curvature-dimension conditions are stable with respect to the measured Gromov-Hausdorff convergence, and cover the cases of Riemannian manifolds, smooth metric measure spaces, Alexandrov spaces and their limits. 
  
 Then we have some natural questions:
 \begin{itemize}
 \item [1)] What is the conformal transformation on $\rcdkn$ spaces?  How to characterize it?
 
\item [2)] Do we have formula   \eqref{eq:conformal}  on $\rcdkn$ spaces? 

\item [3)]  Can we  study the curvature-dimension condition under conformal transformation by using \eqref{eq:conformal}?
 \end{itemize}

To answer these questions, we divide this paper into two parts. In the first part,  we  study the  Sobolev space  and the differential structure of metric measure spaces under conformal transformation. These are the basic tools to study  metric measure space.  The results we have obtained are  useful to study curvature-dimension condition, geometric flows, sectional curvature, etc. In summary, we prove the following results on non-smooth metric measure spaces:

\begin{itemize}
\item [1)](Sobolev space, Proposition \ref{lemma-1}) The Sobolev spaces  $W^{1,2}(M)$ and $W^{1,2}(M')$ coincide as sets and
\[
|\D f|_{M'}=e^{-w}|\D f|_M,~~~\mmae
\]
for any $f\in W^{1,2}(M)$. 
\item [2)](Laplacian, Proposition \ref{lemma-2}) For any $f \in {\rm D}({\bf \Delta}')$, we have
 \[
{\bf \Delta}'f=e^{v-2w}\big{(}{\bf \Delta}f+\Gamma(v-2w,f)\,\mm\big{)},
\]
where $\Gamma(\cdot, \cdot)$ is the  carr\'e du champ operator induced by the weak upper gradients.
\item [3)](Tangent vector, Proposition \ref{lemma-3}) 
$$
\nabla' f=e^{-2w}\nabla f,~~~\mm-\text{a.e.}
$$
and 
$$
\la X, Y \ra_{M'}=e^{2w} \la X, Y \ra,~~~\mm-\text{a.e.}.
$$

The second  formula can be seen as an equivalent characterization of conformal transformation on infinitesimally Hilbertian spaces.
\item [4)] (Hessian, Proposition \ref{Hessian})
\begin{eqnarray*}
|\H'_f|^2_{\rm HS}=e^{-4w}\big{(}|\H_f|^2_{\rm HS}&+&2\Gamma(f)\Gamma(w)+({\dim_{\rm loc}}-2) \Gamma(f,w)^2\\
&-&2\Gamma(w,\Gamma(f))+2\Gamma(f,w)\tr \H_f \big{)}
\end{eqnarray*}
and
\[
\tr \H'_f=e^{-2w} \big{(} \tr \H_f+({\dim_{\rm loc}}-2) \Gamma(f,w) \big{)}
\]
hold $\mm$-a.e. .
\item [5)] (Covariant derivative, Proposition \ref{Cov})
\begin{eqnarray*}
\nabla' X:'( Y \otimes Z)
&=& \nabla X:( Y \otimes Z)-\la Y, \nabla w \ra \la X, Z \ra -\la Z, \nabla w \ra \la X, Y \ra  \\
&&~~~~~~~~+\la X, \nabla w \ra \la Y, Z\ra 
\end{eqnarray*}
\end{itemize}

We say that   a metric measure space $M=(X ,\d, \mm)$ has  Sobolev-to-Lipschitz property if for any $f\in W^{1,2}(X)$  with $|\D f| \in L^\infty$, there exits a Lipschitz continuous function  $\bar{f}$ such that $f=\bar{f}$ $\mm$-a.e. and  $\Lip (\bar{f})=\esup ~{|\D f|}$. 
The Sobolev-to-Lipschitz property is an important property which links metric  and differential structure (see \cite{G-S} and \cite{GH-W}). It is also a pre-requisite to apply Bakry-\'Emery theory on metric measrue space (see \cite{AGS-B}).  In Proposition \ref{lemma-4}, we prove that the conformal transformation preserves Sobolev-to-Lipschitz property.

\begin{proposition}[Sobolev-to-Lipschitz property]
Let $M$ be a $\rcdkn$ metric measure space, where $N <\infty$. The space $M'$ is  constructed by conformal transformation with continuous and bounded conformal factors. Then $M'$  satisfies the Sobolev-to-Lipschitz property.
\end{proposition}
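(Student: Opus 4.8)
The plan is to transfer the problem to the base space $M$ via Proposition \ref{lemma-1}, solve it there, and then transport the conclusion back along the conformal change; the only genuinely new point is the computation of the sharp Lipschitz constant for the length metric $\d'$.

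First I would reduce to $M$. Let $f\in W^{1,2}(M')$ with $|\D f|_{M'}\in L^\infty(\mm')$. Since $w$ and $v$ are bounded, $\mm$ and $\mm'=e^{v}\mm$ are mutually absolutely continuous with bounded densities, so $L^\infty(\mm)=L^\infty(\mm')$ and the essential suprema with respect to the two measures coincide; moreover $e^{w_-}\d\le \d'\le e^{w_+}\d$ with $w_-=\inf w$ and $w_+=\sup w$ (using that $M$ is a length space, so the $\d$-length distance equals $\d$), whence $\d$ and $\d'$ are bi-Lipschitz equivalent and have the same topology and the same rectifiable curves, $(X,\d)$ being geodesic since $M$ is $\rcdkn$ and $(X,\d')$ being a length space by construction. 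By Proposition \ref{lemma-1}, $f\in W^{1,2}(M)$ with $|\D f|_M=e^{w}|\D f|_{M'}$, so $|\D f|_M\in L^\infty(\mm)$, and the Sobolev-to-Lipschitz property of $M$ produces a $\d$-Lipschitz $\bar f$ with $\bar f=f$ $\mmae$ and $\Lip_\d(\bar f)=\mathrm{ess\,sup}_{\mm}|\D f|_M$. By bi-Lipschitz equivalence $\bar f$ is $\d'$-Lipschitz and $\d'$-continuous and $\bar f=f$ $\mm'$-a.e., so $\bar f$ is the desired representative, and everything reduces to the sharp estimate
\[
\Lip_{\d'}(\bar f)\le L,\qquad L:=\mathrm{ess\,sup}_{\mm'}\,|\D f|_{M'}=\mathrm{ess\,sup}_{\mm}\,e^{-w}|\D f|_M ,
\]
the reverse inequality being immediate, as a global Lipschitz constant is a constant upper gradient, giving $|\D f|_{M'}\le \Lip_{\d'}(\bar f)$ $\mm'$-a.e.

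For the sharp bound I would work through the slope. Since $(X,\d')$ is a length space, $\Lip_{\d'}(\bar f)=\sup_{x}\loclip{\bar f}{\d'}(x)$. Using the continuity of $w$, near-optimal $\d'$-curves joining $x$ to a nearby $y$ concentrate in small $\d$-balls about $x$, so $\d'(x,y)/\d(x,y)\to e^{w(x)}$ as $y\to x$ and hence the pointwise conformal scaling $\loclip{\bar f}{\d'}(x)=e^{-w(x)}\,\loclip{\bar f}{\d}(x)$ holds at every $x$. Finally, since $M$ is $\rcdkn$ with $N<\infty$ it is locally doubling and supports a local Poincar\'e inequality, so by Cheeger's theorem the slope of a Lipschitz function agrees with its minimal weak upper gradient, $\loclip{\bar f}{\d}=|\D f|_M$ $\mmae$. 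Combining these three facts, the desired estimate becomes
\[
\sup_{x}\ e^{-w(x)}\,\loclip{\bar f}{\d}(x)\ \le\ \mathrm{ess\,sup}_{\mm}\,e^{-w}\,\loclip{\bar f}{\d}.
\]

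The main obstacle is precisely this last step: it compares a genuine supremum over all points with an essential supremum, twisted by the varying weight $e^{-w}$. For the unweighted slope the two agree, since $\sup_x\loclip{\bar f}{\d}(x)=\Lip_\d(\bar f)=\mathrm{ess\,sup}_{\mm}|\D f|_M$ by the Sobolev-to-Lipschitz property of $M$; but multiplication by the non-constant factor $e^{-w}$ can move where the supremum is attained, so this global identity does not suffice. To close the gap I would localize: the doubling and Poincar\'e structure (here $N<\infty$ is essential) should yield a local Sobolev-to-Lipschitz estimate giving, at every point, $\loclip{\bar f}{\d}(x_0)\le \lims_{r\downarrow 0}\ \mathrm{ess\,sup}_{B_\d(x_0,r)}|\D f|_M$; continuity of $w$ then absorbs the almost-constant weight $e^{-w(x_0)}$ into the essential supremum over the shrinking ball, giving $e^{-w(x_0)}\loclip{\bar f}{\d}(x_0)\le (1+\delta)\,\mathrm{ess\,sup}_{B_\d(x_0,r)}e^{-w}|\D f|_M\le (1+\delta)L$ and, as $\delta\downarrow0$, the claim. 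Equivalently one may argue curve by curve: partition any $\d$-absolutely continuous path $\gamma$ into short arcs on which $w$ oscillates by at most $\delta$, bound the increment of $\bar f$ on each arc by the local essential supremum of $|\D f|_M$ times the $\d$-length of the arc, and sum to obtain $|\bar f(\gamma_0)-\bar f(\gamma_1)|\le (1+\delta)L\int_0^1 e^{w(\gamma_t)}|\dot\gamma_t|\,\d t$, whence $\Lip_{\d'}(\bar f)\le L$ after taking the infimum over $\gamma$ and letting $\delta\downarrow0$. In both routes the crux is the passage from the $\mmae$ bound on $|\D f|_M$ to a bound valid pointwise and along curves, which is exactly where the finite-dimensionality $N<\infty$ (through the PI structure of $M$) and the continuity of the conformal factor $w$ are indispensable.
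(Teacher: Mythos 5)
Your overall architecture coincides with the paper's: reduce to $M$ via Proposition \ref{lemma-1}, take the Lipschitz representative furnished by the Sobolev-to-Lipschitz property of $M$, use the pointwise scaling $\loclip{\bar f}{M'}(x)=e^{-w(x)}\loclip{\bar f}{M}(x)$ together with the length/geodesic identity $\Lip_{\d'}(\bar f)=\sup_x\loclip{\bar f}{M'}(x)$, and close by absorbing the continuous weight $e^{-w(x)}$ into an essential supremum over shrinking balls. You have also correctly isolated the crux: the \emph{local, constant-one} estimate $\loclip{\bar f}{M}(x_0)\le\varlimsup_{r\downarrow0}\ \esup{_{B_r(x_0)}|\D f|_M}$. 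But at exactly this point your argument has a genuine gap. You defer to ``doubling and Poincar\'e structure'' and Cheeger's theorem, saying they \emph{should} yield the local estimate. Cheeger's theorem gives $\loclip{\bar f}{M}=|\D f|_M$ only $\mm$-a.e., which is useless for a bound at \emph{every} point (the slope is not upper semicontinuous), and the standard PI telescoping argument produces a local Lipschitz bound with a structural constant $C>1$ depending on the doubling and Poincar\'e constants. A non-sharp constant is fatal here: the Sobolev-to-Lipschitz property demands the exact identity $\Lip_{M'}(\bar f)=\esup{|\D f|_{M'}}$, and localizing does not improve $C$. Your curve-by-curve fallback is circular for the same reason: the per-arc increment bound $|\bar f(\gamma_a)-\bar f(\gamma_b)|\le\big(\esup{_B|\D f|_M}\big)\,\ell(\gamma|_{[a,b]})$ for an \emph{arbitrary} arc is precisely the unproven local estimate --- an a.e.\ bound on $|\D f|_M$ controls increments only along modulus- or plan-generic families of curves, not along a single chosen curve.

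The paper closes this gap with a specific optimal-transport argument, and this is where the hypotheses $N<\infty$ and the CD structure (not merely PI) genuinely enter: it transports $\frac{1}{\mm(B_\epsilon(y))}\mm\restr{B_\epsilon(y)}$ to $\delta_x$ along a $W_2$-geodesic, invokes Rajala's theorem that the interpolating measures $\mu_t$, $t\in[0,1-\eta]$, have uniformly bounded densities on ${\rm CD}$ spaces, and then uses the test-plan characterization of $W^{1,2}$ to estimate $\big|\int f'\,\d\mu_{1-\eta}-\int f'\,\d\mu_0\big|$ by $\int\int_0^{1-\eta}|\D f|(\gamma_t)|\dot\gamma_t|\,\d t\,\d\Pi$, with Cauchy--Schwarz producing the sharp constant $1$ in the limit $\eta\to0$, $\epsilon\to0$. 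In effect the bounded-density interpolation supplies the ``thick family of near-geodesics'' that your PI-based sketch presupposes but does not construct. To repair your proof you would either reproduce this transport argument or cite a sharp-constant result of this type (e.g.\ via $\infty$-Poincar\'e/thick geodesics); as written, the step from the a.e.\ gradient bound to the pointwise slope bound with constant one is asserted, not proven.
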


In the second part of this paper,  we study the curvature dimension condition  under  conformal transformation.  This problem is difficult to handle using  Lott-Sturm-Villani's original definition. One possible method to conquer this difficulty  is to construct a  formula similar to \eqref{eq:conformal}.  In  \cite{S-R} Sturm defines an abstract Ricci tensor,  and studies its conformal transformation under some  smoothness assumptions. In this work we apply the results about differential structure on $\rcdkn$  spaces, which is studied in \cite{G-N} (and \cite{H-R}), to prove Sturm's result on $\rcdkn$ space.  In Theorem \ref{th-conformal} we extend the formula \eqref{eq:conformal} to $\rcdkn$ spaces. As an application, we  obtain an estimate of the curvature-dimension condition under conformal transformation. 

Now we briefly explain the proof of the main theorem.
 Firstly,  based on  results about the Sobolev space under conformal transformation,  we study the non-smooth (co)tangent modules developed by Gigli (in \cite{G-N})  under conformal transformation. Secondly, we  compute the measure-valued  Laplacian  (see \cite{G-O} and \cite{S-S}). Then we prove in Proposition \ref{lemma-4} that the conformal transformation preserves Sobolev-to-Lipshitz property, so we can apply Bakry-\'Emery's theory. In \cite{G-N} and \cite{H-R}, the measure-valued Ricci tensor is defined as
 \begin{equation}\label{eq:riccin}
\bRicn(\nabla f, \nabla f):={\bf \Gamma}_2(f) -\Big{(} |\H_f|^2_{\rm HS}+\frac1{N-{\dim_{\rm loc}}}(\tr \H_f -\Delta f)^2 \Big{)} \, \mm,
\end{equation}
where  $\dim_{\rm loc}$ is the local dimension.
It is proved in \cite{G-N} and \cite{H-R} that $\bRicn\geq K$ is equivalent to Bochner inequality and $\rcdkn$ condition. 
Combining with our results on  Hessian and its Hilbert-Schmidt norm, we show that $\dim_{\rm loc}$ is invariant under conformal transformation,  and the Ricci tensor under conformal transformation is well-defined, see Theorem \ref{th-conformal} for the transformation formula. Then we  obtain the estimate of the curvature-dimension condition under conformal transformation in Corollary \ref{coro-conformal} and Corollary \ref{coro-conformal-2}.
In particular, for  the  transformation $\ms \mapsto (X, e^{w}\d, e^{Nw}\,\mm)$ on $\rcdkn$ space, we  obtain in Corollary \ref{coro-N} a non-smooth version of the formula \eqref{eq:conformal}. We remark that our result about the lower curvature estimate, even on Alexandrov space, is new and optimal.

The paper is organized as follows. In Section 2 we  introduce the notions of Sobolev space, non-smooth Bakry-\'Emery theory, the tangent/cotangent module and analytic dimension of metric measure spaces.  In Section 3 we  study the conformal transformation on metric measure spaces, the Sobolev space as well as the differential structures under conformal transformation. All these objects are considered in pure intrinsic ways. We study the Ricci tensor and obtain a generalization of the formula \eqref{eq:conformal} on $\rcdkn$ spaces. Then we obtain a precise   estimate of $N$-Ricci curvature under conformal transformation.

\noindent \textbf{Acknowledgement}: The author acknowledges the support of the HCM fellowship. He thanks Prof. Karl-Theodor Sturm for proposing this topic and valuable advice, and he thanks Anna Mkrtchyan for reading the preliminary version of the manuscript. The author also wants to thank the reviewer for helpful remarks and  suggestions. \\

\section{Preliminaries}
Basic assumptions on metric measure spaces are the following. 
\begin{assumption}\label{assumption}
Let $M:=\ms$ be a metric measure space. We assume that 
\begin{itemize}
\item [a)] $(X ,\d)$ is a complete separable geodesic space.
\item [b)] $\mm$ is a $\d$-Borel measure  satisfying 
\[
\supp \mm=X,~~~~\mm(B_r(x_0)) < c_1\exp{(c_2 r^2)}~~~\text{for every}~~r>0,
\]
 for some constants $c_1, c_2 \geq 0$ and a point $x_0 \in X$.
 \item [c)] $M$ is an infinitesimally Hilbertian space.
\end{itemize} 
\end{assumption}
 Important examples satisfying Assumption \ref{assumption} are  $\rcdkn$ metric measure spaces, where $K \in \R$ and $N \in [1, \infty]$ (it is $\rcd$ when $N=\infty$). Both $\rcd$ and $\rcdkn$ conditions are refinements of the curvature-dimensions proposed by Lott-Sturm-Villani (see \cite{Lott-Villani09} and \cite{S-O1} for $\cd$) and Bacher-Sturm (see \cite{BS-L} for ${\rm CD}^*(K, N)$). The  inclusions of these curvature dimension conditions are
   \[
\rcdkn \subset {\rm CD}^*(K, N) ~~\text{and}~~ \rcd \subset \cd,
 \]
 and
 \[
\rcdkn \subset \rcd ~~\text{and}~~{\rm CD}^*(K, N) \subset \cd.
 \]

More details about the curvature dimension condition $\rcdkn$ can be found in \cite{AGMR-R}, \cite{AGS-M} and \cite{EKS-O}.

Let $f : X \mapsto \mathbb{R}$. The local Lipschitz constant $\lip{f}: X \mapsto [0, \infty]$ is defined as
\[
\lip{f}(x):=
\left 
\{ \begin{array}{ll}
 \mathop{\overline{\lim}}_{y \to x}\frac{|f(y) -f(x)|}{\d(x, y)}~~\text{if}~x~\text{is not isolated},\\
0, ~~~~~~~~~~~~~~~~~~~~~~~~~~~~~\text{otherwise}.
\end{array} 
\right .\]
The (global) Lipschitz constant is defined in the usual way as
\[
\Lip(f):= \mathop{\sup}_{x \neq y} \frac{|f(y)-f(x)|}{\d(x,y)}.
\]
Since $(X, \d)$ is a geodesic space, we know $\Lip(f)=\mathop{\sup}_{x}  \lip{f}(x)$.

Now we introduce the the Sobolev space $W^{1,2}(M)$, which is firstly introduced by Cheeger in \cite{C-D}.
In \cite{AGS-C}, a new characterization has been proposed. We say that $f\in L^2(X, \mm)$ is a Sobolev function in $W^{1,2}(M)$ if there exists a sequence of Lipschitz functions $\{f_n\} \subset L^2$,  such that $f_n \to f$ and $\lip{f_n} \to G$ in $L^2$ for some $G \in L^2(X, \mm)$. It is known that there exists a minimal function $G$ in $\mm$-a.e. sense. We call this  $G$ the minimal weak  upper gradient (or weak gradient for simplicity) of the function $f$, and denote it by $|\D f|$ or $|\D f|_M$ to indicate which space we are considering.

We equip $W^{1,2}(M)$ with the norm
\[
\|f\|^2_{W^{1,2}\ms}:=\|f\|^2_{L^2(X,\mm)}+\||\D f|\|^2_{L^2(X,\mm)}.
\]
If $W^{1,2}\ms$ is a Hilbert space, we call $\ms$  infinitesimally Hilbertian (which is introduced in \cite{G-O}).

As a consequence of the definition above, we have the lower semi-continuity: if $\{f_n\}_n\subset W^{1,2}\ms$ is a sequence converging to some $f$ in $\mm$-a.e. sense such that $\{|\D f_n|\}_n$ is bounded in $L^2(X,\mm)$, then $f\in W^{1,2}\ms$ and
\[
|\D f|\leq G,\qquad\mm\text{-a.e.},
\]
for every $L^2$-weak limit $G$ of some subsequence of $\{|\D f_n|\}_n$. Furthermore, we have the following proposition.

\begin{proposition}[see \cite{AGS-C}]\label{prop-density}
Let $\ms$ be a metric measure space.  Then the Lipschitz functions are dense in energy in $W^{1,2}(M)$ in following sense. For any $f\in W^{1,2}(M)$, there is a sequence of Lipschitz functions $\{f_n \}_n\subset  L^2(X,\mm)$ such that $f_n \rightarrow f$ and $\lip{ f_n} \rightarrow |\D f|$ in $L^2$.
\end{proposition}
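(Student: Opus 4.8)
The plan is to produce the required sequence as a \emph{recovery sequence} for the relaxation that defines $W^{1,2}(M)$, and then to promote the weak compactness that this relaxation supplies to genuine strong convergence by exploiting the Hilbert structure of $L^2(X,\mm)$. Recall that $|\D f|$ is, by construction, the minimal function $G$ (in the $\mm$-a.e.\ sense) that arises as an $L^2$-limit of local Lipschitz constants along Lipschitz functions converging to $f$; equivalently, writing $\mathrm{Ch}(f):=\inf\big\{\lmti{n}{\infty}\int_X(\lip{f_n})^2\,\d\mm\big\}$ with the infimum over Lipschitz $f_n\to f$ in $L^2$, one has $\int_X|\D f|^2\,\d\mm=\mathrm{Ch}(f)$.

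First I would fix a sequence of Lipschitz functions $g_n\to f$ in $L^2$ with $\int_X(\lip{g_n})^2\,\d\mm\to\mathrm{Ch}(f)=\int_X|\D f|^2\,\d\mm$, which exists after passing to a subsequence by the definition of the infimum; passing to a further subsequence I may also assume $g_n\to f$ $\mm$-a.e. Because $\{\lip{g_n}\}_n$ is then bounded in $L^2$, reflexivity gives, along a subsequence, weak limits $\lip{g_n}\weakto H$ and $|\D g_n|\weakto H'$ in $L^2$; since $|\D g_n|\leq\lip{g_n}$ $\mm$-a.e., the inequality is preserved in the weak limit, so $H'\leq H$ $\mm$-a.e.

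Next I would identify $H$ with $|\D f|$ by a two-sided energy estimate. Applying the lower-semicontinuity property recorded above to $g_n\to f$ (with the bounded gradients $|\D g_n|\weakto H'$) yields $f\in W^{1,2}(M)$ and $|\D f|\leq H'\leq H$ $\mm$-a.e., whence $\int_X|\D f|^2\,\d\mm\leq\int_X H^2\,\d\mm$. On the other hand, weak lower semicontinuity of the $L^2$-norm gives $\int_X H^2\,\d\mm\leq\lmti{n}{\infty}\int_X(\lip{g_n})^2\,\d\mm=\int_X|\D f|^2\,\d\mm$. The two inequalities force $\int_X H^2\,\d\mm=\int_X|\D f|^2\,\d\mm$, and combined with $H\geq|\D f|$ this gives $H=|\D f|$ $\mm$-a.e. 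Thus $\lip{g_n}\weakto|\D f|$ weakly while $\|\lip{g_n}\|_{L^2}\to\big\||\D f|\big\|_{L^2}$; in the Hilbert space $L^2(X,\mm)$ weak convergence together with convergence of norms implies strong convergence, so $\lip{g_n}\to|\D f|$ in $L^2$, and the subsequence $\{g_n\}$ is the desired approximating sequence.

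The step I expect to be the main obstacle is the identification of the weak limit: a priori the relaxation only delivers weak $L^2$-compactness of the approximating gradients, so one must show that the weak limit $H$ cannot be strictly larger than $|\D f|$ on a set of positive $\mm$-measure. This is exactly where the minimality built into the definition of $|\D f|$ enters, transmitted through the lower-semicontinuity property and the domination $|\D g_n|\leq\lip{g_n}$; once $H=|\D f|$ is secured, the passage from weak to strong convergence is the soft Radon-Riesz step that upgrades energy convergence to convergence of the gradients themselves.
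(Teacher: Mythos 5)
The paper does not actually prove this proposition: it is quoted from \cite{AGS-C}, so the benchmark is the argument there (the density-in-energy lemma for the Cheeger functional), and your proposal reproduces its second half faithfully --- minimizing sequence for the relaxed energy, identification of the weak limit with $|\D f|$ via minimality plus lower semicontinuity, and the Radon--Riesz upgrade from weak to strong convergence in the Hilbert space $L^2(X,\mm)$. Those steps are all sound as written: $|\D g_n|\leq \lip{g_n}$ $\mm$-a.e.\ holds because the constant sequence exhibits $\lip{g_n}$ as an admissible $G$ for the Lipschitz function $g_n$; the lower-semicontinuity statement recorded in the paper applies verbatim once you pass to an $\mm$-a.e.\ convergent subsequence; and $H\geq |\D f|\geq 0$ together with $\|H\|_{L^2}\leq \||\D f|\|_{L^2}$ does force $H=|\D f|$ $\mm$-a.e., after which weak convergence plus convergence of norms gives strong convergence.

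The genuine gap is the identity you install at the outset with the word ``equivalently'': $\int_X |\D f|^2\,\d\mm=\mathrm{Ch}(f)$. The inequality $\int_X|\D f|^2\,\d\mm\leq \mathrm{Ch}(f)$ indeed follows from your own identification step applied to an arbitrary competitor sequence, but the converse inequality $\mathrm{Ch}(f)\leq\int_X|\D f|^2\,\d\mm$ is load-bearing --- it is exactly what closes your two-sided estimate via $\int H^2\leq\lim_n\int(\lip{g_n})^2=\int|\D f|^2$ --- and it is \emph{not} a reformulation of the definition. Under the paper's literal definition, where $|\D f|$ is the minimal $G$ arising as a \emph{strong} $L^2$-limit of $\lip{f_n}$, the existence of such a minimal strong limit already \emph{is} the proposition, so reading the identity off the definition is circular. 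Under the actual definition in \cite{AGS-C}, where $|\D f|$ is the minimal \emph{relaxed} gradient built from weak limits, the inequality requires an argument: given Lipschitz $f_n\to f$ with $\lip{f_n}\weakto G$, Mazur's lemma yields convex combinations $h_n=\sum_k\alpha^n_k f_k$, still Lipschitz and converging to $f$ in $L^2$, with $\sum_k\alpha^n_k\lip{f_k}\to G$ strongly; since $\lip{h_n}\leq\sum_k\alpha^n_k\lip{f_k}$ by subadditivity of the local Lipschitz constant, one obtains $\mathrm{Ch}(f)\leq\int_X G^2\,\d\mm$, and applying this to weak limits realizing the minimal relaxed gradient (whose existence rests on the convexity, closedness and locality/lattice properties of the set of relaxed gradients) supplies the missing half. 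This Mazur step is precisely the content your ``equivalently'' suppresses; once it is inserted, your proof coincides with the one in \cite{AGS-C}.
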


Next, we introduce some  basic notions and facts about `tangent/cotangent vector field' in non-smooth setting, more details can be found in \cite{G-N}.

\begin{definition}[$L^2$-normed $L^\infty$-module]\label{L2norm}

Let $M=\ms$ be a metric measure space. A $L^2$-normed $L^\infty(X,\mm)$-module is a Banach space $({\bf B}, \| \cdot \|_{\bf B})$ equipped with a bilinear map
\begin{eqnarray*}
L^\infty(X,\mm) \times {\bf B} &\mapsto&  {\bf B},\\
 (f,v) &\mapsto& f \cdot v
\end{eqnarray*}
such that
\begin{eqnarray*}
(fg) \cdot v &=& f \cdot  (g \cdot v),\\
 {\bf 1} \cdot v &=& v
\end{eqnarray*}
for every $v \in {\bf B}$ and $f, g \in L^\infty(M)$, where ${\bf 1} \in L^\infty(X,\mm)$ is identically equal to 1 on $X$, and a `pointwise norm' $|\cdot|: {\bf B} \mapsto  L^2(X,\mm)$ which maps $v \in {\bf B}$ to  a non-negative $L^2$-function
such that
\begin{eqnarray*}
\|v\|_{\bf B} &=&  \| |v| \|_{L^2}\\
|f\cdot v| &=& |f||v|,~~~ \mmae.
\end{eqnarray*}
for every $f \in L^\infty(X,\mm)$  and $v \in  {\bf B}$.
\end{definition}

%%%%%%%%%%%%%%%%%%%%%%%%å·¥ä½è³æ­¤
 We define the `Pre-Cotangent Module' $\mathcal{PCM}$ as the set consisting of  elements of  form  $\{ (B_i, f_i )\}_{i  \in \mathbb{N}}$, where $\{B_i\}_{i \in \mathbb{N}}$ is a Borel partition of $X$, and $\{f_i\}_i$ are Sobolev functions with $\sum_i \int_{B_i} |\D f_i|^2 <\infty$.

We define an equivalence relation on $\mathcal{PCM}$ via
\[
 \{ (A_i, f_i )\}_{i  \in \mathbb{N}} \thicksim \{ (B_j, g_j )\}_{j  \in \mathbb{N}}~~~\text {if}~~~~
 |\D(g_j-f_i)|=0, ~~\mm-\text {a.e. on}~ A_i \cap B_j.
\]
We denote the equivalence class of $\{ (B_i, f_i )\}_{i  \in \mathbb{N}}$ by $[ (B_i, f_i )]$. In particular, we call $[(X, f)]$ the differential of a Sobolev function $f$ and denote it by $\d f$.

Then we define the following operations:
\begin{itemize}\label{premodule}
\item [1)] $[ (A_i, f_i )]+[ (B_i, g_i )]:=[ (A_i \cap B_j, f_i+g_j )]$,

\item [2)] Multiplication by scalars: $\lambda [ (A_i, f_i )]:= [ (A_i, \lambda f_i )]$,

\item [3)] Multiplication by simple functions: $(\sum_j \lambda_j \nchi_{B_j})  [ (A_i, f_i )]:=[ (A_i \cap B_j, \lambda_j f_i )]$,
\item [4)] Pointwise norm: $|[ (A_i, f_i )]|:=\sum_i \nchi_{A_i} |\D f_i|$,
\end{itemize}
where $\nchi_A$ is the characteristic function on the set $A$.
%%%%%%%%%%%%%%%%%%%%%%%%%%%%%%%%%%%%%%%%
\bigskip

It can be seen that all the operations above are continuous on $\mathcal{PCM}/ \thicksim$ with respect to the norm $\| [ (A_i, f_i )] \|:= \sqrt{\int|[ (A_i, f_i )]|^2\,  \mm}$ and the $L^\infty(M)$-norm on the space of simple functions. Therefore, we can extend them to the completion of $(\mathcal{PCM}/\thicksim, \| \cdot \|)$ and we denote this completion by $L^2(T^*M)$. As a consequence of our definition, we can see that $L^2(T^* M)$ is the $\| \cdot \|$ closure of $\{\sum_{i \in I} a_i \d f_i: |I|<\infty,  a_i \in L^\infty(M), f_i \in W^{1,2} \}$ (see Proposition 2.2.5 in \cite{G-N} for a proof).  It can also be seen from the definition and the infinitesimal Hilbertianity assumption on $M$ that $L^2(T^*M)$ is a Hilbert space equipped with the inner product induced by $\| \cdot \|$. Moreover, $(L^2(T^*M), \| \cdot \|, |\cdot |)$ is a $L^2$-normed module according to Definition \ref{L2norm}, which we shall call cotangent module of $M$.

%%%%%%%%%%%%%%%%%%%%
%It is known from/ It can be seen/proved
%%%%%%%%%%%%%%%%%%%%%
We then define the tangent module $L^2(TM)$ as  $\mathrm{Hom}_{L^\infty(M)}(L^2(T^*M), L^1(M))$. In other words, $T \in L^2(T^* M)$ if it is a continuous linear map from $L^2(T^*M)$  to $L^1(M)$ viewed as Banach spaces satisfying the homogeneity:
\[
T(f v)=f  T(v), ~~\forall v \in L^2(T^*M),~~ f \in L^\infty(M).
\]

It can be seen that $L^2(TM)$ has a natural $L^2$-normed $L^\infty(M)$-module structure and  is isometric to $L^2(T^*M)$ both as a module and as a Hilbert space. We denote the corresponding element of $\d f$ in $L^2(TM)$ by $\nabla f$ and call it the gradient of $f$. By Riesz theorem for Hilbert modules (see Chapter 1 of \cite{G-N}), we know that $\d f(\nabla f):=\nabla f( \d f)=|\D f|^2$. The natural pointwise norm on $L^2(TM)$ (we also denote it by $| \cdot |$) satisfies $|\nabla f|=|\d f|=|\D f|$. It can also be seen that  $\{\sum_{i \in I} a_i  \nabla f_i: |I|<\infty,  a_i \in L^\infty(M), f_i \in W^{1,2} \}$ is a dense subset in $L^2(T M)$.

On an infinitesimally Hilbertian space, we have a natural `carr\'e du champ' operator $\Gamma(\cdot, \cdot): [W^{1,2}(M)]^2 \mapsto L^1(M)$ defined by
\[
\Gamma(f, g):= \frac14 \Big{(}|\D (f+g)|^2-|\D (f-g)|^2\Big{)}.
\]
We denote $\Gamma(f,f)$ by $\Gamma(f)$.

Then we have a pointwise inner product $\la \cdot, \cdot \ra: [L^2(T^*M)]^2 \mapsto L^1(M)$ satisfying
\[
\la \d f, \d g \ra:= \Gamma(f, g)
\]
for $f, g \in W^{1,2}(M)$.  We know also from Riesz theorem that the gradient $\nabla g$  is exactly the element in $L^2(TM)$ such that $\nabla g (\d f)=\la \d f, \d g \ra$, $\mm$-a.e. for every $f \in W^{1,2}(M)$. Therefore, $L^2(TM)$ inherits a pointwise inner product from $L^2(T^*M)$ and we still use $\la \cdot, \cdot \ra$ to denote it.

It is known from  \cite{G-N} that the following basic calculus rules hold in $\mm$-a.e. sense.

We have that
\begin{itemize}
\item [i)] $\d (fg)=f\d g +g\d f$,
\item [ii)] $\d (\varphi \circ f)=\varphi'\circ f \d f$,
\end{itemize}
for every $f, g \in W^{1,2}(M)$, and every smooth $\varphi: \R \mapsto \R$  with bounded derivative.

We then define the Laplacian by duality/integration by part. 
The space of finite  Borel measures on $M$, equipped with the total variation norm $\| \cdot \|_{\rm TV}$, is denoted by ${\rm Meas}(M)$.

\begin{definition}
[Measure valued Laplacian, \cite{G-O, G-N}]
The space ${\rm D}({\bf \Delta}) \subset  W ^{1,2}(M)$ is the space of $f \in  W ^{1,2} (M)$ such that there is a measure ${\bf \mu}\in {\rm Meas}(M)$ satisfying
\[
\int \varphi \,{\mathbf \mu}= -\int \Gamma (\varphi,  f ) \,   \mm, ~~\forall \varphi: M \mapsto  \R, ~~ \text{Lipschitz with bounded support}.
\]
In this case the measure $\mu$ is unique and we denote it by ${\bf \Delta} f$. If ${\bf \Delta} f \ll m$, we denote its density  with respect to $\mm$ by $\Delta f$.
\end{definition}

It is proved in \cite{G-O} that the following  rules hold for the Laplacian:
\begin{itemize}
\item [i)] ${\bf \Delta}(fg)=f{\bf \Delta} g +g{\bf \Delta} f+2\Gamma(f,g)\,\mm$,
\item [ii)] ${\bf \Delta} (\varphi \circ f)=\varphi'\circ f {\bf \Delta} f+\varphi''\circ f \Gamma(f)\,\mm$,
\end{itemize}
for every $f, g \in {\rm D}({\bf \Delta}) \cap L^\infty(M)$, and every smooth $\varphi: \R \mapsto \R$  with bounded first and second derivatives.

We define  ${\rm TestF}(M) \subset W^{1,2}(M)$, the space of test functions  as
\[
{\rm TestF}(M):= \Big\{f \in {\rm D} ({\bf \Delta}) \cap L^\infty: |\D f|\in L^\infty~~ {\rm and}~~~ \Delta f  \in W^{1,2}(M) \Big\}.
\]
It is known from \cite{S-S}  that ${\rm TestF}(M)$ is an algebra and it is  dense in $W^{1,2}(M)$ when $M$ is $\rcd$. In particular, we know the space of test vectors $\{\sum_{i \in I} a_i  \nabla f_i: |I|<\infty,  a_i \in L^\infty(M), f_i \in {\rm TestF}(M) \}$ is dense in $L^2(T M)$.

We also have the following lemma.

\begin{lemma}[\cite{S-S}]\label{self-improve}
Let $M=\ms$ be a $\rcd$  space, $f\in {\rm TestF}(M)$ and $\Phi \in C^\infty(\R)$
 with $\Phi(0)=0$. Then $\Phi \circ f \in {\rm TestF}(M)$.
\end{lemma}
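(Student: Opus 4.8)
The plan is to verify directly that $g:=\Phi\circ f$ satisfies the three defining requirements of ${\rm TestF}(M)$: that $g\in {\rm D}({\bf \Delta})\cap L^\infty$, that $|\D g|\in L^\infty$, and that $\Delta g\in W^{1,2}(M)$. The starting observation is that $f\in L^\infty$ confines the essential range of $f$ to a compact interval $[-R,R]$, on which $\Phi$ and all of its derivatives are bounded; combined with $\Phi(0)=0$, which gives $|\Phi(t)|\le C|t|$ on $[-R,R]$, this yields $g\in L^2\cap L^\infty$. For the gradient I would use the chain rule $\d g=\Phi'\circ f\,\d f$, whence $|\D g|=|\Phi'\circ f|\,|\D f|\le C|\D f|$; since $f\in{\rm TestF}(M)$ gives $|\D f|\in L^\infty\cap L^2$, we obtain $g\in W^{1,2}(M)\cap L^\infty$ with $|\D g|\in L^\infty$.

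Next I would apply the chain rule for the measure-valued Laplacian to $f\in{\rm D}({\bf \Delta})\cap L^\infty$ and the smooth $\Phi$, obtaining
\[
{\bf \Delta}g=\Phi'\circ f\,{\bf \Delta}f+\Phi''\circ f\,\Gamma(f)\,\mm.
\]
Because $f\in{\rm TestF}(M)$ we have ${\bf \Delta}f=\Delta f\,\mm$ with $\Delta f\in W^{1,2}(M)$, while $\Phi'\circ f,\Phi''\circ f\in L^\infty$ and $\Gamma(f)=|\D f|^2\in L^\infty\cap L^2$. Hence ${\bf \Delta}g$ is absolutely continuous with respect to $\mm$ with density
\[
\Delta g=\Phi'\circ f\,\Delta f+\Phi''\circ f\,\Gamma(f)\in L^2,
\]
which in particular shows $g\in{\rm D}({\bf \Delta})$.

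It remains to show that this density lies in $W^{1,2}(M)$, which I expect to be the only genuinely nontrivial point. Differentiating via the Leibniz and chain rules,
\begin{align*}
\d(\Delta g)={} &\Phi''\circ f\,\Delta f\,\d f+\Phi'\circ f\,\d(\Delta f)\\
&+\Phi'''\circ f\,\Gamma(f)\,\d f+\Phi''\circ f\,\d\big(\Gamma(f)\big),
\end{align*}
and I would verify that each summand has $L^2$ pointwise norm. The three terms carrying a factor $\d f$ are controlled by $|\D f|\in L^\infty$ together with $\Delta f\in L^2$, $\Gamma(f)\in L^\infty$ and the bounded coefficients $\Phi''\circ f,\Phi'''\circ f$; the term $\Phi'\circ f\,\d(\Delta f)$ is controlled because $\Delta f\in W^{1,2}(M)$ and $\Phi'\circ f\in L^\infty$. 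The decisive term is $\Phi''\circ f\,\d\big(\Gamma(f)\big)$: bounding it in $L^2$ requires $\Gamma(f)=|\D f|^2\in W^{1,2}(M)$, which is precisely the self-improvement property of test functions valid on $\rcd$ spaces and established in \cite{S-S} as a consequence of the Bochner inequality and $\Gamma_2$-calculus. This is the one place where the Riemannian curvature hypothesis is essential. A minor technical point to dispatch along the way is the legitimacy of the Leibniz rule for products such as $\Phi'\circ f\cdot\Delta f$, whose first factor lies in $L^\infty$ with bounded gradient but need not belong to $L^2$; this is handled by the standard truncation arguments of \cite{G-N}, since only the bound $|\D(\Phi'\circ f)|\in L^\infty$ enters. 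Granting $\Gamma(f)\in W^{1,2}(M)$, all four terms are in $L^2$, so $\Delta g\in W^{1,2}(M)$ and hence $\Phi\circ f\in{\rm TestF}(M)$.
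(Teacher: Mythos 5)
Your proof is correct, and there is in fact no internal argument in the paper to compare it against: the lemma is imported verbatim from Savar\'e's work \cite{S-S} without proof. Your reconstruction follows the same route as the cited source. You verify the three defining conditions of ${\rm TestF}(M)$ by routine chain-rule calculus ($\Phi(0)=0$ giving $\Phi\circ f\in L^2$ when $\mm$ is infinite, $|\D(\Phi\circ f)|\le C|\D f|$, and the measure-valued chain rule ${\bf \Delta}(\Phi\circ f)=\Phi'\circ f\,{\bf \Delta}f+\Phi''\circ f\,\Gamma(f)\,\mm$, which the paper lists among the available calculus rules from \cite{G-O}), and you correctly isolate the single non-elementary ingredient: that $\Gamma(f)\in {\rm D}({\bf \Delta})\subset W^{1,2}(M)$ for $f\in{\rm TestF}(M)$, which is precisely the self-improvement result of \cite{S-S} that the paper records in the paragraph immediately following the lemma, and the only point where the $\rcd$ (Bochner) hypothesis is used. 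Two minor points you pass over are harmless but worth making explicit: (i) membership in ${\rm D}({\bf \Delta})$ as defined in the paper requires ${\bf \Delta}(\Phi\circ f)$ to be a \emph{finite} measure, which holds because $\Delta f\in L^1$ (it is the density of a finite measure) and $\Gamma(f)\in L^1$, with bounded coefficients $\Phi'\circ f,\Phi''\circ f$; (ii) the Leibniz rule for products such as $\Phi'\circ f\cdot\Delta f$ and $\Phi''\circ f\cdot\Gamma(f)$, where the first factor is bounded with bounded gradient but possibly not in $L^2$, is indeed legitimate by the locality and truncation arguments of \cite{G-N}, as you note. With these observations your four-term decomposition of $\d(\Delta(\Phi\circ f))$ is complete and each term is in $L^2$, so the argument stands.
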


\bigskip

It is proved in \cite{S-S} that $\Gamma(f, g)  \in {\rm D}({\bf \Delta})\subset W^{1,2}(M)$ for any $f, g \in {\rm TestF}(M)$. Hence we can define the Hessian and ${\bf \Gamma}_2$ operator as follows. 

Let $f \in {\rm TestF}(M)$. We define the Hessian $\H_f: \{ \nabla g: g \in {\rm TestF}(M)\}^2 \mapsto L^0(M)$ by
\[
2\H_f(\nabla g,\nabla h)=\Gamma ( g,  \Gamma ( f,  h )) +\Gamma(h,  \Gamma(f, g))-\Gamma( f, \Gamma( g, h)),
\]
 for any $g, h \in {\rm TestF}(M)$. It has been proven in \cite{G-N} that $\H_f$ can be extended to a symmetric $L^\infty(M)$-bilinear map on $L^2(TM)$ and continuous with values in $L^0(M)$.

Let $f,g \in {\rm TestF}(M)$. We define the measure valued operator ${\bf \Gamma}_2(\cdot, \cdot)$ by
\[
{\bf \Gamma}_2(f,g):=\frac12 {\bf \Delta} \Gamma (f,  g) -\frac12 \big{(}\Gamma (f,  \Delta g)+\Gamma ( g,  \Delta f )\big{)}\, \mm,
\]
and we put ${\bf \Gamma}_2(f):={\bf \Gamma}_2(f,f)$.

Then we can characterize the curvature-dimension condition using non-smooth Bakry-\'Emery theory. We recall that  $\rcd$ and $\rcdkn$ spaces are infinitesimally Hilbertian $\cd$ and $\cdkn$ spaces.
We also  recall  the following terminology which is introduced in \cite{G-S}. We say that a metric measure space $M=(X ,\d, \mm)$ has  Sobolev-to-Lipschitz property if for any function $f\in W^{1,2}(X)$  such that $|\D f| \in L^\infty$, we can find a Lipschitz continuous function  $\bar{f}$ such that $f=\bar{f}$ $\mm$-a.e. and  $\Lip (\bar{f})=\esup ~{|\D f|}$.

We have the following definition/proposition. This weak Bochner's inequality has been proposed in \cite{AGS-B} for every $K, N$, and it is proved to be equivalent to $\rcdkn$ condition in \cite{AGS-B} ($N=\infty$) and \cite{AMS-N},\cite{EKS-O} ($N<\infty$). We rewrite it according to the results in \cite{S-S} (see Lemma 3.2 and Theorem 4.1 there). 

\begin{proposition} [Bakry-\'Emery condition, \cite{AGS-B}, \cite{EKS-O}, \cite{S-S}]\label{becondition}
Let $M=\ms$ be  an  infinitesimal Hilbert space satisfying Assumption \ref{assumption} and  the Sobolev-to-Lipschitz property, ${\rm TestF}(M)$ is dense in $W^{1,2}(M)$. Then it is a $\rcdkn$ space with $K \in \R$ and $N \in [1, \infty]$ if and only if the Bochner's inequality 
\[
{\bf \Gamma}_2(f) \geq \Big {(} K |\D f|^2+ \frac1N (\Delta f)^2 \Big{)}\,\mm
\]
holds in weak sense for any $f \in {\rm TestF}(M)$, that is
\[
\int \Delta g\frac{|\D f|^2}2\,\d \mm\geq\int g\Big(K|\D f|^2+\Gamma(f,\Delta f)+\frac{(\Delta f)^2}N\Big )\,\d \mm
\]
\end{proposition}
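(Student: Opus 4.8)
The statement is the now-standard equivalence between the Lagrangian $\rcdkn$ condition and the Eulerian Bakry-\'Emery condition $\bekn$ (the weak Bochner inequality), and the plan is to prove the two implications separately, along the lines of \cite{AGS-B}, \cite{EKS-O} and \cite{S-S}. The common thread is the heat semigroup $(P_t)_{t\ge0}$, which under the standing infinitesimal Hilbertianity is the linear $L^2$-gradient flow of the Cheeger energy; its regularizing properties are what translate between the metric and energetic pictures, while the density of ${\rm TestF}(M)$ is what lets one extend inequalities from test functions to all of $W^{1,2}(M)$.

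For the implication $\rcdkn \Rightarrow \bekn$, I would exploit that on an $\rcdkn$ space the heat flow is also the metric gradient flow of the relative entropy ${\rm Ent}_\mm$ in $(\pr,\ws)$ and satisfies the dimensional evolution variational inequality ${\rm EVI}_{K,N}$. From ${\rm EVI}_{K,N}$, a Kuwada-type duality argument yields the dimensional gradient estimate
\[
|\D P_t f|^2 + \frac{2}{N}\int_0^t e^{-2K(t-s)}\,P_{t-s}\big((\Delta P_s f)^2\big)\,\d s \le e^{-2Kt}\,P_t\big(|\D f|^2\big),\qquad \mmae,
\]
valid for $f\in{\rm TestF}(M)$. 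Differentiating this inequality at $t=0^+$, testing against a nonnegative Lipschitz $\varphi$ with bounded support, and unwinding the definition of ${\bf \Gamma}_2$ produces exactly ${\bf \Gamma}_2(f) \ge \big(K|\D f|^2 + \tfrac1N(\Delta f)^2\big)\,\mm$.

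For the converse $\bekn \Rightarrow \rcdkn$, I would run the argument in reverse. Starting from the weak Bochner inequality, a self-improvement and integration-by-parts computation (using the algebra structure of ${\rm TestF}(M)$ and its density) upgrades it to the same gradient estimate for $P_t$. Here the Sobolev-to-Lipschitz property enters decisively: it is precisely what converts the resulting $L^\infty$ bound on $|\D P_t f|$ into genuine Lipschitz control, allowing one to pass from energetic gradient bounds to metric ($\ws$) contraction via Kuwada duality. The $\ws$-contraction together with the Daneri-Savar\'e characterization of ${\rm EVI}$ gradient flows then returns the geodesic $(K,N)$-convexity of the entropy, i.e. the $\rcdkn$ condition.

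The main obstacle is the bridge between the Eulerian and Lagrangian descriptions, concentrated in the equivalence ``dimensional gradient estimate $\Leftrightarrow$ ${\rm EVI}_{K,N}$''. This is where the finite-dimensional refinement (the $\tfrac1N(\Delta f)^2$ term) is genuinely delicate, and where the Sobolev-to-Lipschitz hypothesis is not a technical convenience but a necessity: without it the abstract $L^2$-gradient information fails to control the metric structure, and the two curvature notions need not coincide. Since the present paper only applies this proposition (after verifying its hypotheses for the conformally transformed space via Proposition \ref{lemma-4}), I would in practice cite \cite{AGS-B}, \cite{EKS-O} and \cite{S-S} for the full argument rather than reproduce it.
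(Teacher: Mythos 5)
Your proposal is correct and takes essentially the same route as the paper: the paper does not prove Proposition \ref{becondition} at all, but cites \cite{AGS-B} (for $N=\infty$) and \cite{AMS-N}, \cite{EKS-O} (for $N<\infty$), restated via Lemma 3.2 and Theorem 4.1 of \cite{S-S}, and your outline (${\rm EVI}_{K,N}$ plus Kuwada duality giving the dimensional gradient estimate and differentiation at $t=0^+$ in one direction; self-improvement, the Sobolev-to-Lipschitz property and the Daneri--Savar\'e characterization in the converse) is precisely the argument of those references. Since you likewise conclude by deferring to the same citations rather than reproducing the full equivalence, your treatment agrees with the paper's.
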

for  any $g\in \{ g\in W^{1,2}(M): g, \Delta g\in L^\infty \}$.
\begin{remark}
In some articles (for example \cite{AGS-B}, \cite{EKS-O}), the  following property
\[
\d(x ,y)=\sup \big\{f(x)-f(y): f \in W^{1,2}(M) \cap C_b(M), |\D f| \leq 1,~~\mmae \big \}
\]
is needed.
It can be seen that  this property can be obtained from Sobolev-to-Lipschitz property by considering the functions $\{\d(z, \cdot) : z \in X \}$.
\end{remark}

Now we  turn to discuss the dimension of $M$ which is understood as the dimension of  $L^2(TM)$  (as a $L^\infty$-module).
Let $B$ be a Borel set. We denote by  $L^2(TM)\restr{B}$ the subset of $L^2(TM)$ consisting of those $v$ such that $\nchi_{B^c} v=0$.

\begin{definition}[Local independence] Let $B$ be a Borel set with positive measure. We say that $\{v_i\}_1^n \subset L^2(TM)$ is independent on $B$ if
\[
\sum_i f_i v_i = 0, ~~\mmae ~~\text{on}~~ B
\]
holds if and only if $f_i = 0$  $\mm$-a.e. on $B$ for each $i$.

\end{definition}

\begin{definition}[Local span and generators] Let $B$ be a Borel set in $X$ and $V:=\{v_i\}_{i \in I} \subset L^2(TM)$. The span of $V$ on $B$, denoted by ${\rm Span}_B(V)$, is the subset of $L^2(TM)\restr{B}$  with the following property: there exists a Borel decomposition $\{B_n\}_{n\in \mathbb{N}}$ of $B$,  families of vectors $ {\{v_{i,n}\}_{i=1}^{m_n} \subset B}$ and functions $\{f_{i,n}\}_{i=1}^{m_n} \subset L^\infty(M)$, $n=1,2,...,$ such that
\[
\nchi_{B_n}  v=\mathop{\sum}_{i=1}^{m_n} f_{i,n} v_{i,n}
\]
for each $n$.
We call the closure of ${\rm Span}_B(V)$  the space generated by $V$ on $B$.
\end{definition}

We say that $L^2(TM)$ is finitely generated if there exists $\{v_1,...,v_n\}$ spanning $L^2(TM)$ on $X$, and locally finitely generated if there is a (Borel) partition $\{E_i\}$ of $X$ such that $L^2(TM)\restr{E_i}$ is finitely generated for every $i \in \mathbb{N}$.

\begin{definition}[Local basis and dimension] We say that a finite set $\{ v_1,...,v_n\}$ is a basis on a Borel set $B$ if it is independent on $B$ and ${\rm Span}_B\{v_1,...,v_n\} = L^2(TM)\restr{B}$.
If $L^2(TM)$ has a basis of cardinality $n$ on $B$, we say that it has dimension $n$ on $B$, or say that its local dimension on $B$ is $n$.
If $L^2(TM)$ does not admit any local basis of finite cardinality on  any subset of $B$ with positive measure, we say that the local dimension of $L^2(TM)$  on $B$ is infinity.
\end{definition}

It is proved in Proposition 1.4.4 \cite{G-N} that the  basis and dimension are well defined.
It can also be proven that there exits a unique decomposition $\{ E_n\}_{n \in \mathbb{N} \cup \{\infty\}}$ of $X$, such that for each $E_n$ with positive measure, $n \in \mathbb{N} \cup \{\infty\}$, $L^2(TM)$ has dimension $n$ on $E_n$. Furthermore, thanks to the infinitesimal Hilbertianity we have  the following proposition.

\begin{proposition}[Theorem 1.4.11, \cite{G-N}]\label{decomposition}
Let $\ms$ be a $\rcd$ metric measure space. Then there exists  a unique decomposition $\{ E_n\}_{n \in \mathbb{N} \cup \{\infty\}}$ of $X$ such that
\begin{itemize}
\item [a)] For any $n \in \mathbb{N}$ and any $B \subset E_n$ with finite positive measure,  $L^2(TM)$ has a unit orthogonal basis $\{e_{i,n}\}_{i=1}^n$ on $B$,
\item [b)] For every subset $B$ of $E_\infty$ with finite positive measure, there exists a unit orthogonal set $\{e_{i,B}\}_{i \in \mathbb{N} \cup \{\infty\}} \subset L^2(TM)\restr{B}$  which generates $L^2(TM)\restr{B}$,
\end{itemize}
where unit orthogonal of a countable set $\{v_i\}_i\subset L^2(TM)$ on $B$ means $\la v_i, v_j \ra=\delta_{ij}$ $\mm$-a.e. on $B$.
\end{proposition}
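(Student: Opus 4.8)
The plan is to separate the two assertions of the statement: the existence and uniqueness of the partition $\{E_n\}_{n\in\mathbb N\cup\{\infty\}}$, and the construction of the unit orthogonal frames on its pieces. The first is already in hand, since the paragraph preceding the proposition records that local dimension is well defined and that there is a \emph{unique} Borel decomposition of $X$ on whose $n$-th piece $L^2(TM)$ has dimension $n$; I would simply cite this for existence and uniqueness of $\{E_n\}$. So the real work is to upgrade an arbitrary local basis into a \emph{unit orthogonal} one, and this is exactly where the infinitesimal Hilbertianity of $M$ enters, through the Hilbert-module structure of $L^2(TM)$ and its pointwise inner product $\langle\cdot,\cdot\rangle$. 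I would treat the finite pieces $E_n$ ($n\in\mathbb N$) and the infinite piece $E_\infty$ separately.

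Fix $n\in\mathbb N$ and $B\subset E_n$ of finite positive measure, and let $\{v_1,\dots,v_n\}$ be a basis on $B$. First I would note that independence forces $|v_i|>0$ $\mm$-a.e.\ on $B$, since otherwise $\nchi_A v_i=0$ with $\nchi_A\neq0$ for some $A\subset B$ of positive measure. Next I would run a pointwise Gram--Schmidt procedure using $\langle\cdot,\cdot\rangle$: set $w_1:=v_1$ and $w_{k+1}:=v_{k+1}-\sum_{j\le k}\frac{\langle v_{k+1},w_j\rangle}{|w_j|^2}\,w_j$, checking inductively (again from independence, after a truncation to tame the merely measurable coefficients) that $|w_k|>0$ $\mm$-a.e. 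The coefficients and the inverse norms $1/|w_k|$ are only measurable, so the normalization $e_k:=w_k/|w_k|$ cannot be performed by a single $L^\infty$-multiplication; I would resolve this by decomposing $B=\bigsqcup_m B_m$ into Borel pieces on which all the finitely many functions involved are bounded, carrying out each step there by genuine $L^\infty$-multiplications, and then glueing. The element $\sum_m\nchi_{B_m}(w_k/|w_k|)$ lies in $L^2(TM)\restr{B}$ because it has unit pointwise norm and $\mm(B)<\infty$, so the partial sums are Cauchy and converge by completeness of the module. The resulting $\{e_1,\dots,e_n\}$ is unit orthogonal by construction, independent (pairing $\sum f_ie_i=0$ pointwise with $e_j$ yields $f_j=0$), and spanning, because on each $B_m$ the Gram--Schmidt change of frame is invertible over $L^\infty$, so $\{e_i\}$ and $\{v_i\}$ generate the same submodule.

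For $B\subset E_\infty$ of finite positive measure I would first invoke separability of $L^2(TM)$, inherited from that of $W^{1,2}(M)$ under our standing assumptions, to extract a countable family $\{u_j\}_j$ generating $L^2(TM)\restr{B}$. I would then orthonormalize $\{u_j\}$ by the same device, iterated countably: at stage $k$ the vector $w_k$ produced from $u_k$ may vanish on a Borel subset of $B$ (local dependence), and there I discard it, while on its complement I normalize and glue as above. Since no positive-measure subset of $E_\infty$ carries a finite basis, the family so obtained is genuinely infinite; and because the $\{u_j\}$ generate $L^2(TM)\restr{B}$ and each $u_j$ lies, on every decomposition piece, in the submodule generated by the $e_i$, the closure of the span of $\{e_i\}$ is all of $L^2(TM)\restr{B}$. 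Relabelling gives the desired generating set $\{e_{i,B}\}$.

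I expect the normalization step to be the main obstacle, together with the bookkeeping it forces. The tension is between \emph{global} independence on $B$ and \emph{local} (sub-piece) dependence: a family can be independent on $B$ while its pointwise norms or Gram--Schmidt coefficients degenerate or blow up on positive-measure subsets, making the naive division by $|w_k|$ illegal. Handling this requires repeatedly refining $B$ into countably many Borel pieces on which the relevant measurable functions are controlled, performing the algebra piecewise with $L^\infty$-coefficients, and reassembling via the completeness (glueing) of the $L^2$-normed module, all while verifying that independence, orthonormality and the generated submodule survive these refinements.
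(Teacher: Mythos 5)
The paper offers no proof of this proposition: it is quoted verbatim from the reference, so the benchmark is Gigli's own argument for Theorem 1.4.11 in \cite{G-N}, and your proposal is essentially that argument --- take the unique dimensional decomposition as given, then orthonormalize a local basis by pointwise Gram--Schmidt, handling the merely measurable coefficients and inverse norms by refining into Borel pieces where everything is bounded, acting by genuine $L^\infty$-multiplications there, and reassembling via completeness of the $L^2$-normed module, with separability furnishing the countable generating family on $E_\infty$ and the ``discard where it vanishes, then relabel the survivors'' step (justified exactly as you say, since a finite surviving family on a positive-measure subset of $E_\infty$ would yield a finite local basis there) producing the unit orthogonal generating set. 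Your proof is correct and takes the same route as the cited source.
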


\begin{definition}[Analytic Dimension]
 Let $\{ E_n\}_{n \in \mathbb{N} \cup \{\infty\}}$ be the decomposition given in Proposition \ref{decomposition}. We define the local dimension $\dim_{\rm loc}:M \mapsto \mathbb{N}\cup \{\infty\}$ by $\dim_{\rm loc}(x)=n$ on $E_n$. We say that the dimension of $L^2(TM)$ is $k$ if $k=\sup \{n: \mm(E_n)>0\}$.  We define the analytic dimension of $M$ as the dimension of $L^2(TM)$ and denote it by $\dim_{\rm max} M$.
\end{definition}

We have the following proposition concerning the analytic dimension of $\rcdkn$ spaces.

\begin{proposition}[See \cite{H-R}]\label{finite dim}
Let $M=\ms$ be a $\rcdkn$ metric measure space. Then ${\dim}_{\rm max} M \leq N$. Furthermore, if the local dimension on a Borel set $E$ is $N$, we have
$\tr \H_f(x)=\Delta f (x)$ $\mm$-a.e. $x \in E$ for every $f \in {\rm TestF}$.
\end{proposition}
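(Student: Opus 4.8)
The plan is to reduce both assertions to a single scalar optimization performed $\mm$-a.e.\ on each dimensional stratum $E_n:=\{\dim_{\rm loc}=n\}$. I would start from the two available forms of Bochner's inequality. Proposition \ref{becondition} gives
\[
{\bf \Gamma}_2(f)\ \ge\ \Big(\tfrac1N(\Delta f)^2+K|\D f|^2\Big)\,\mm ,
\]
while the self-improvement of the Bakry-\'Emery condition (Savar\'e, as used in \cite{S-S}, \cite{G-N}, \cite{H-R}) makes the tensor
\[
\bRic(\nabla f,\nabla f):={\bf \Gamma}_2(f)-|\H_f|^2_{\rm HS}\,\mm
\]
well defined and subject to $\bRic(\nabla f,\nabla f)\ge K|\D f|^2\,\mm$. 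The feature I would exploit is that $\bRic(\nabla f,\nabla f)$ and $|\D f|$ depend on $f$ only through its gradient, whereas $\tr\H_f$, $|\H_f|^2_{\rm HS}$ and $\Delta f$ do not. On $E_n$ the module $L^2(TM)$ carries a unit orthogonal frame $\{e_i\}_{i=1}^n$ (Proposition \ref{decomposition}); writing $\H_f$ as the symmetric matrix $(\H_f(e_i,e_j))$ and applying Cauchy-Schwarz yields the fiberwise bound $|\H_f|^2_{\rm HS}\ge \tfrac1n(\tr\H_f)^2$ $\mm$-a.e.\ on $E_n$, with equality iff $\H_f$ is a multiple of the identity.

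Eliminating ${\bf \Gamma}_2(f)$ between the two inequalities and passing to densities on absolutely continuous parts, the $N$-Bochner inequality becomes $\tfrac1N(\Delta f)^2+K|\D f|^2\le |\H_f|^2_{\rm HS}+\bRic(\nabla f,\nabla f)/\mm$; replacing $|\H_f|^2_{\rm HS}$ by its minimal (radial) value $\tfrac1n(\tr\H_f)^2$ leaves, at $\mm$-a.e.\ point of $E_n$, the scalar inequality
\[
\tfrac1N\,b^2\ \le\ \tfrac1n\,a^2+C_v,\qquad a:=\tr\H_f,\quad b:=\Delta f,
\]
where $C_v:=\bRic(\nabla f,\nabla f)/\mm-K|\D f|^2\ge0$ depends on $f$ only through $v:=\nabla f$. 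Using the second-order calculus of \cite{G-N} I would then choose, at a.e.\ point of $E_n$, a family of test functions with prescribed gradient value $v$ along which $a=\tr\H_f$ ranges over an unbounded set while $v$, $C_v$ and the drift $b-a=\Delta f-\tr\H_f$ stay fixed, and read off the three regimes. For $n>N$ the coefficient $\tfrac1n-\tfrac1N$ of $a^2$ in $\tfrac1n a^2+C_v-\tfrac1N b^2$ is negative, so this nonnegative quantity would tend to $-\infty$ as $a\to\pm\infty$, a contradiction; hence $\mm(E_n)=0$ and $\dim_{\rm max}M\le N$. For $n=N$ that coefficient vanishes and the inequality degenerates to an affine constraint in $a$, which can persist as $a\to\pm\infty$ only if the drift $\Delta f-\tr\H_f$ vanishes $\mm$-a.e.; this is exactly $\tr\H_f=\Delta f$ on $E_N$. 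For $n<N$ the same optimization, minimized at the radial Hessian, returns the dimensional Bochner inequality $\bRic(\nabla f,\nabla f)\ge K|\D f|^2\,\mm+\tfrac1{N-n}(\Delta f-\tr\H_f)^2\,\mm$, confirming the computation is consistent with \eqref{eq:riccin}.

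The step I expect to be the main obstacle is the rigorous nonsmooth realization of this pointwise jet freedom: one must prove, using the frame $\{e_i\}$ on $E_n$ and the density of test vectors, that at $\mm$-a.e.\ point the achievable second-order data $(\nabla f,\H_f,\Delta f)$ are rich enough to vary $\tr\H_f$ unboundedly while holding the gradient and the drift fixed, and one must justify the pointwise optimization for the measure-valued ${\bf \Gamma}_2(f)$ after discarding its nonnegative singular part on $E_n$. The second nontrivial input is Savar\'e's self-improvement, guaranteeing that $\bRic(\nabla f,\nabla f)$ is a genuine tensor depending on $f$ only through $\nabla f$; this is precisely what licenses treating $C_v$ as fixed while the Hessian varies. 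I would finally remark that $\dim_{\rm max}M\le N$ can alternatively be derived from the known Hausdorff-dimension bound for $\cdkn$ spaces, but that the trace identity on $E_N$ genuinely rests on the $\Gamma_2$ argument above.
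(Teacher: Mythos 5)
Your overall strategy---combining the dimensional Bochner inequality of Proposition \ref{becondition} with the tensoriality of Gigli's Ricci (so that $C_v$ depends on $f$ only through $\nabla f$), choosing a frame on each stratum $E_n$ via Proposition \ref{decomposition}, and blowing up the Hessian trace---is indeed the skeleton of the argument in \cite{H-R}, which this paper cites rather than reproves. But as written there are two genuine gaps, and they are not merely the technicalities you flag at the end. First, the reduction to the scalar inequality $\frac1N b^2\le\frac1n a^2+C_v$ is invalid: Cauchy--Schwarz gives $|\H_f|^2_{\rm HS}\ge\frac1n(\tr \H_f)^2$, and substituting a \emph{lower} bound for $|\H_f|^2_{\rm HS}$ on the majorizing side of $\frac1N(\Delta f)^2\le|\H_f|^2_{\rm HS}+C_v$ weakens the right-hand side, so the scalar inequality does not follow; it would only follow if you exhibit functions whose Hessian is asymptotically a multiple of the identity, which is exactly the step you defer. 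Second, the deferred ``jet freedom'' step fails in the form you state it for the trace identity: to conclude $\tr\H_f=\Delta f$ on $E_N$ you want $a=\tr\H_f\to\pm\infty$ with the drift $R_f:=\Delta f-\tr\H_f$ held fixed, but $R$ is linear in $f$, so the perturbation $t\,q_1$ you add to inflate the trace contributes its own drift $t\,R_{q_1}$; the coefficient of $t^2$ in the limiting inequality then only yields $(n+R_{q_1})^2\le nN$, which is circular, since the vanishing of such drifts is precisely what is being proved.

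The proof in \cite{H-R} (following the Savar\'e--Gigli self-improvement scheme) avoids both problems by never isolating a pointwise ``drift'' to be frozen. One plugs $\Phi(f_1,\dots,f_k)$, with $f_i\in{\rm TestF}(M)$ whose gradients form a unit orthogonal frame on $B\subset E_n$, into ${\bf \Gamma}_2(\cdot)\ge\big(K\Gamma(\cdot)+\frac1N(\Delta\cdot)^2\big)\,\mm$, and uses the chain rules $\Delta\Phi(f)=\sum_i\Phi_i\Delta f_i+\sum_{i,j}\Phi_{ij}\Gamma(f_i,f_j)$ together with the corresponding expansion of ${\bf \Gamma}_2$; arbitrariness of the polynomial $\Phi$ turns this into an $\mm$-a.e.\ inequality that is a quadratic form in the free coefficients $(a_i,b_{ij})=(\Phi_i,\Phi_{ij})$. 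Because $\Gamma(f_i,f_j)=\delta_{ij}$ on $B$, the second-order coefficients enter the Laplacian exactly through $\tr B$ and the fourth-order square exactly through $|B|^2_{\rm HS}$, with no uncontrolled residue: taking $a=0$, $B=\Id$ gives $n\ge n^2/N$, i.e.\ $n\le N$; and on $E_N$, taking $B=t\,\Id$ and matching the terms linear in $t$ as $t\to\pm\infty$ gives $\sum_i a_i\tr\H_{f_i}=\sum_i a_i\Delta f_i$ for every $a$, whence $\tr\H_f=\Delta f$ on $E_N$ by linearity and density of test vectors. Finally, your closing remark that $\dim_{\rm max}M\le N$ could alternatively be read off from the Hausdorff-dimension bound for $\cdkn$ spaces is unfounded at this level: no comparison between the analytic dimension of the module $L^2(TM)$ and the Hausdorff dimension is available from the tools used in this paper.
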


Combining the results in Proposition \ref{decomposition} and Proposition \ref{finite dim}, we know there is a coordinate system on $\rcdkn$ space $\ms$, i.e.  there exists a  partition of $X$: $\{E_n\}_{n\leq N}$,  such that $\dim_{\rm loc}(x)=n$ on $E_n$ and $\{e_{i,n}\}_i, n=1,..., \lfloor N \rfloor$ are the unit orthogonal  basis on corresponding $E_n$.  Then we can do computations on  $\rcdkn$ spaces in a similar way as on manifolds. For example, the pointwise Hilbert-Schmidt norm $|S |_{\rm HS}$  of a $L^\infty$-bilinear map $S:[L^2(TM)]^2 \mapsto L^0(M)$ could be defined in the following  way.  Let $S_1, S_2 :[L^2(TM)]^2 \mapsto L^0(M)$ be symmetric bilinear maps. We define $\la S_1, S_2 \ra_{\rm HS}$ as a function such that $\la S_1, S_2 \ra_{\rm HS}:=\sum_{i,j}S_1(e_{i,n}, e_{j,n})S_2(e_{i,n}, e_{j,n})$, $\mm$-a.e. on $E_n$. Clearly, this definition is well posed.  In particular, we can define the Hilbert-Schmidt norm of $S$ as $\sqrt{\la S, S \ra_{\rm HS} }$ and denote it by $|S|_{\rm HS}$.  It can be seen that it is compatible with the canonical definition of Hilbert-Schmidt norm on vector space. The trace of $S$ can be computed by $\tr S= \la S, \Id_{\dim_{\rm loc}} \ra_{\rm HS}$ where  $\Id_{\dim_{\rm loc}}$ is the unique  map satisfying $\Id_{\dim_{\rm loc}}(e_{i,n},
e_{j,n})=\delta_{ij}$, $\mm$-a.e. on $E_n$.

%%%%%%%%%%%%%%%%%%%%%%%%%%%%%%%%%%%%%%%%%%%%%%%%%%%%%
% å±å½¢åæ¢-åºæ¬å¼ç
%%%%%%%%%%%%%%%%%%%%%%%%%%%%%%%%%%%%%%%%%%%%%%%%%%%
\section{Main results}
\subsection{Conformal transformation}
In this section we  study the conformal transformation on metric measure space. Firstly we introduce  some basic definitions and facts. It can be seen that all the objects about  conformal transformation are intrinsically defined.

\bigskip

Let $w, v$ be  continuous and bounded functions on the metric measure space $M=\ms$. We construct a metric measure space $M':=(X, \d',\mm')$  by conformal transformation in the following way.
\begin{itemize}
\item [(1)] We replace $\mm$ by the weighted measure with density $e^{v}$:
\[
\mm'=e^{v}\,\mm.
\]
\item [(2)] We replace $\d$ by $\d'$,  the weighted metric with conformal factor $e^{w}$:
\[
\d'(x, y)=\mathop{\inf}_{\gamma} \Big\{ \int_0^1 |\dot{\gamma_t}|e^{w(\gamma_t)}\,\d t: \gamma \in {\rm AC}([0,1],X), \gamma_0=x, \gamma_1=y\Big\},
\]
where  ${\rm AC}([0,1],X)$ is the space of absolutely continuous curves on $X$. For simplicity, we  adopt the notation $\d'=e^{w}\d$ to emphasize the conformal factor.
\end{itemize}
\bigskip
Then we have a basic proposition. We say that a geodesic space is proper if  closed geodesic balls are compact.

\begin{proposition}\label{prop:geod}
Let $(X, \d)$ be a proper  geodesic space, $w$ be  a continuous and bounded function. Then the space $(X, \d')$ is  a geodesic space whose topology coincides with $(X,\d)$.
\end{proposition}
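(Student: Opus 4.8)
The plan is to compare $\d'$ with $\d$ locally, deduce that they induce the same topology and that $\d'$ is a genuine metric, then observe that $\d'$ is a length metric, and finally invoke the Hopf--Rinow--Cohn-Vossen theorem to conclude that it is geodesic.

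\emph{Local bi-Lipschitz comparison.} Fix $x\in X$. Since $(X,\d)$ is locally compact I can choose $\rho=\rho(x)>0$ such that $\overline{B}_\rho(x)$ is compact, and by continuity of $w$ the numbers $m:=\inf_{\overline{B}_\rho(x)}w$ and $M:=\sup_{\overline{B}_\rho(x)}w$ are finite. Writing $L_w(\gamma):=\int_0^1|\dot\gamma_t|e^{w(\gamma_t)}\,\d t$ for the weighted length and $L_\d(\gamma)$ for the $\d$-length, one has $e^{m}L_\d(\gamma)\le L_w(\gamma)\le e^{M}L_\d(\gamma)$ for every curve $\gamma$ contained in $\overline{B}_\rho(x)$. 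For the upper bound on $\d'$, given $y$ with $\d(x,y)<\rho$ I use a $\d$-geodesic from $x$ to $y$, which stays in $\overline{B}_\rho(x)$, to get $\d'(x,y)\le e^{M}\d(x,y)$. For the lower bound I argue by first exit: an arbitrary curve $\gamma$ from $x$ to $y$ either stays in $\overline{B}_\rho(x)$, so that $L_w(\gamma)\ge e^{m}\d(x,y)$, or it first reaches $\partial B_\rho(x)$, in which case its initial portion lies in $\overline{B}_\rho(x)$ and has $\d$-length at least $\rho$, so that $L_w(\gamma)\ge e^{m}\rho> e^{m}\d(x,y)$. In either case $L_w(\gamma)\ge e^{m}\d(x,y)$, hence $\d'(x,y)\ge e^{m}\d(x,y)$. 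Thus $e^{m}\d(x,y)\le \d'(x,y)\le e^{M}\d(x,y)$ whenever $\d(x,y)<\rho$.

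\emph{Metric axioms and topology.} The two-sided estimate shows that the identity map is a local homeomorphism, so the topologies of $\d$ and $\d'$ coincide; in particular $\d'(x,y)>0$ for $x\neq y$. Finiteness of $\d'$ for arbitrary points follows because $(X,\d)$ is geodesic: a $\d$-geodesic $\gamma$ joining $x$ and $y$ is compact, so $w\le\sup_\gamma w<\infty$ on it and $\d'(x,y)\le e^{\sup_\gamma w}\d(x,y)<\infty$. Symmetry and the triangle inequality are inherited from reversal and concatenation of curves, and $\d'(x,x)=0$ via the constant curve; hence $\d'$ is a metric inducing the original topology.

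\emph{Length structure and geodesics.} By construction $\d'$ is the infimum of a length functional. Using the local comparison one checks that the metric speed of a curve with respect to $\d'$ equals $e^{w(\gamma_t)}|\dot\gamma_t|$ for a.e.\ $t$, so that the $\d'$-length of any curve coincides with $L_w(\gamma)$; consequently $\d'$ equals the intrinsic (length) metric it induces. Finally, $(X,\d')$ is locally compact because its topology agrees with that of $\d$, and it is complete since, $w$ being bounded below on the relevant region, every $\d'$-Cauchy sequence is $\d$-Cauchy and therefore converges. The Hopf--Rinow--Cohn-Vossen theorem for complete, locally compact length spaces then yields that $(X,\d')$ is geodesic. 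The delicate steps are the lower bound in the local comparison, where curves leaving the neighbourhood must be controlled by the first-exit argument, and the verification of the completeness hypothesis of Hopf--Rinow for $\d'$, which is exactly where the (local) boundedness of the conformal factor $w$ is used.
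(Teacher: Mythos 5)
Your proof is correct and takes essentially the same route as the paper's: the paper likewise deduces the coincidence of topologies and completeness from the continuity and local boundedness of $e^w$, and its ``mid-points argument'' is precisely the standard proof of the Hopf--Rinow--Cohn-Vossen theorem that you invoke as a black box. Your write-up merely supplies details the paper leaves implicit (the first-exit lower bound, the intrinsicness of $\d'$, the transfer of completeness); note that, exactly like the paper, you tacitly use completeness of $(X,\d)$, which is part of the standing assumptions on metric measure spaces rather than of the statement itself.
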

\begin{proof}
Since $w$ is continuous and bounded,  the weight  $e^{w}$ is also bounded and continuous. We know the topology of  $(X, \d')$  coincides with the topology of $(X, \d)$, and $(X, \d')$ is complete. Let $x, y\in X$ be two different points.  Assume that  $\{\gamma^n\}_n \subset {\rm AC}([0,1],X)$ is a minimizing sequence such that $\int_0^1 |\dot{\gamma_t}^n|\,\d t \to \d'(x,y)$.   Since the space $(X, \d)$ is proper,  and the mid-points of  $\gamma^n$ in $(X, \d')$ are bounded, we know that there exists a point $\gamma_{\frac12}\in X$ such that  $\d'(x, \gamma_{\frac12})=\d'(y, \gamma_{\frac12})=\frac12 \d'(x,y)$. Then we can build a geodesic $(\gamma_t)_t$ in $(X, \d')$ by repeating this `mid-point argument'. 
\end{proof}

It can be seen that the conformal transformation is reversible. The space $M$ can be obtained from $M'$ through conformal transformation, in which case the conformal factors $e^w, e^v$ should be replaced by $e^{-w}$ and $e^{-v}$.

\bigskip

Let $M :=\ms$ be a Riemannian manifold,   $w,v \in C^\infty$.   The gradient and Laplacian on $M':=(X, \d', e^v\mm)$  are denoted by $\nabla'$ and $\Delta'$ respectively. We have the following assertions.
\begin{itemize}
\item [1)] The Sobolev spaces $W^{1,2}(M)$ and $W^{1,2}(M')$ coincide as sets.
\item [2)] For any $f\in W^{1,2}(M)=W^{1,2}(M')$, we have $|\d f|_{M'}=e^{-w}|\d f|_M$,  and $\nabla' f=e^{-2w}\nabla f$.
\item [3)] For any $X, Y \in TM=TM'$, we have $\la X, Y \ra_{M'}=e^{2w} \la X, Y \ra$.
\item [4)] For any $u\in C^\infty(M)=C^\infty(M')$, we have ${\Delta}'u=e^{-2w}\big{(}{\Delta}u+\Gamma(v-2w,u)\big{)}$.
\end{itemize}

Next, we will prove the non-smooth counterparts of these properties. First of all,   we have a  simple lemma concerning the identification of Sobolev spaces.

\begin{lemma}[Lemma 3.11, \cite{GH-W}] \label{gradient-compare}
Let $M=(X,\d, \mm)$, $M'=(X,\d', \mm')$ be metric measure spaces. Assume that $\d\geq\d'$, $\d'$ induces the same topology as $\d$,  and $c\mm\leq \mm' \leq C\mm$ for some $c, C>0$. Then $W^{1,2}(M') \subset W^{1,2}(M)$, and for any function $f \in W^{1,2}(M')$, we have: $|\D f|_M \leq |\D f|_{M'}$ $\mm$-a.e..
\end{lemma}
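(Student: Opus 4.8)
The plan is to approximate $f\in W^{1,2}(M')$ by functions that are Lipschitz with respect to $\d'$, to observe that such functions are automatically Lipschitz with respect to the larger metric $\d$, and then to pass to the limit using the lower semicontinuity of the minimal weak upper gradient recorded after Proposition \ref{prop-density}. Two hypotheses do the background work throughout: the comparability $c\mm\le\mm'\le C\mm$ makes $L^2(\mm)$ and $L^2(\mm')$ coincide as sets with equivalent norms, so every $L^2$-convergence may be read interchangeably in either measure; and the fact that $\d$ and $\d'$ induce the same topology makes ``$y\to x$'' and $\mm$-a.e.\ convergence mean the same thing in both spaces.

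First I would record the pointwise comparison of local Lipschitz constants. For a function $g$ and a non-isolated $x$, since $\d(x,y)\ge\d'(x,y)>0$ whenever $y\ne x$, we have $\frac{|g(y)-g(x)|}{\d(x,y)}\le\frac{|g(y)-g(x)|}{\d'(x,y)}$; taking $\lmts{y}{x}$ (the same limit in either topology) gives $\loclip{g}{M}(x)\le\loclip{g}{M'}(x)$. In particular any $\d'$-Lipschitz function is $\d$-Lipschitz, and when $g$ is Lipschitz the local constant $\loclip{g}{M}$ is an upper gradient, so $|\D g|_M\le\loclip{g}{M}\le\loclip{g}{M'}$ $\mm$-a.e.

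Next, given $f\in W^{1,2}(M')$, Proposition \ref{prop-density} applied to $M'$ furnishes $\d'$-Lipschitz functions $f_n$ with $f_n\to f$ and $\loclip{f_n}{M'}\to|\D f|_{M'}$ in $L^2(\mm')$. By comparability of the measures these convergences also hold in $L^2(\mm)$, and passing to a subsequence I may assume $f_n\to f$ $\mm$-a.e. Each $f_n$ is $\d$-Lipschitz and lies in $L^2(\mm)$, hence $f_n\in W^{1,2}(M)$ with $|\D f_n|_M\le\loclip{f_n}{M'}$, so that $\{|\D f_n|_M\}_n$ is bounded in $L^2(\mm)$. The lower semicontinuity property then yields $f\in W^{1,2}(M)$ --- this is exactly the asserted inclusion $W^{1,2}(M')\subset W^{1,2}(M)$ --- together with $|\D f|_M\le G$ $\mm$-a.e., where $G$ is the weak $L^2(\mm)$-limit of a (further) subsequence of $\{|\D f_n|_M\}_n$.

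Finally I would sharpen the bound from $G$ to $|\D f|_{M'}$. Since $0\le|\D f_n|_M\le\loclip{f_n}{M'}$ pointwise, with $\loclip{f_n}{M'}\to|\D f|_{M'}$ strongly in $L^2(\mm)$ while $|\D f_n|_M\weakto G$, testing against an arbitrary nonnegative $\varphi\in L^2(\mm)$ and letting $n\to\infty$ gives $\int\varphi\,G\,\mm\le\int\varphi\,|\D f|_{M'}\,\mm$, whence $G\le|\D f|_{M'}$ and therefore $|\D f|_M\le|\D f|_{M'}$ $\mm$-a.e. The one delicate point is precisely this weak-versus-strong limit comparison: the inequality $|\D f_n|_M\le\loclip{f_n}{M'}$ survives in the limit only because the dominating sequence converges \emph{strongly} (not merely weakly), so one must be careful to keep that domination pointwise and invoke strong convergence of the majorant. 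Everything else is bookkeeping legitimized by the two comparability hypotheses on the metrics and the measures.
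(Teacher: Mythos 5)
Your proof is correct, and every tool you invoke is available in the paper's own preliminaries: the pointwise comparison $\loclip{g}{M}\le\loclip{g}{M'}$ from $\d\ge\d'$ (legitimate because the two metrics induce the same topology, so the limsup as $y\to x$ is along the same filter, and isolated points agree), the energy-density of Lipschitz functions (Proposition \ref{prop-density} applied on $M'$), the transfer of $L^2$-convergence between $\mm$ and $\mm'$ via $c\mm\le\mm'\le C\mm$, and the stated lower semicontinuity of minimal weak upper gradients. The final weak-versus-strong limit step is handled correctly: testing $0\le|\D f_n|_M\le\loclip{f_n}{M'}$ against nonnegative $\varphi\in L^2(\mm)$, with $|\D f_n|_M\weakto G$ and $\loclip{f_n}{M'}\to|\D f|_{M'}$ strongly, does give $G\le|\D f|_{M'}$ $\mm$-a.e., and you are right that strong convergence of the majorant is what makes the domination survive. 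One cosmetic redundancy: you do not need the "upper gradient" property of $\loclip{g}{M}$ at all, since the constant sequence $f_n=g$ in the relaxation definition already yields $|\D g|_M\le\loclip{g}{M}$ directly.

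As for the comparison: the paper itself does not prove this lemma; it is quoted from \cite{GH-W}, where the argument runs through the test-plan (weak upper gradient) characterization of $W^{1,2}$ rather than through Lipschitz relaxation. There, one observes that $\d\ge\d'$ makes every $\d$-absolutely continuous curve $\d'$-absolutely continuous with metric speed $|\dot\gamma_t|'\le|\dot\gamma_t|$, and the measure comparability makes every test plan of $M$ a test plan of $M'$ up to constants; the upper gradient inequality for $|\D f|_{M'}$ along such plans then shows directly that $|\D f|_{M'}$ is a weak upper gradient in $M$, giving both conclusions at once. Your route via density in energy plus lower semicontinuity is genuinely different: it is more self-contained relative to this paper, which takes the relaxation definition of $W^{1,2}$ as primary and never introduces test plans, but it relies on the (nontrivial, though general) density-in-energy theorem of \cite{AGS-C} applied on $M'$, whereas the test-plan proof needs no approximation at all and avoids the weak-compactness bookkeeping of your last step. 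Both are valid; yours is the natural proof given the definitions this paper actually states.
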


\begin{proposition}[Sobolev space under conformal transformation]\label{lemma-1}
Let $M=\ms$ be a metric measure space, $v,w$ be   bounded continuous functions.  The space $M':=(X, \d', \mm')$  is  constructed by conformal transformation as above. Then $W^{1,2}(M)$ and $W^{1,2}(M')$ coincide as sets and
\[
|\D f|_{M'}=e^{-w}|\D f|_M,~~~\mmae
\]
for any $f\in W^{1,2}(M)$. In particular, if $M$ is infinitesimally Hilbertian,  $M'$ is also infinitesimally Hilbertian.
\end{proposition}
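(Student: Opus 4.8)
The plan is to reduce the statement to a single pointwise identity for the \emph{local} Lipschitz constant and then transport that identity to arbitrary Sobolev functions through the density-in-energy property of Proposition \ref{prop-density}, using reversibility of the construction to upgrade inequalities to equalities. First I would record the elementary comparisons. Since $w$ is bounded, set $L:=\|w\|_\infty$, so that along any absolutely continuous curve the weighted integrand satisfies $e^{-L}|\dot\gamma_t|\le |\dot\gamma_t|e^{w(\gamma_t)}\le e^{L}|\dot\gamma_t|$; taking infima over curves and using that $(X,\d)$ is geodesic yields $e^{-L}\d\le \d'\le e^{L}\d$. In particular $\d$ and $\d'$ are bi-Lipschitz equivalent, induce the same topology, and share the same class of Lipschitz functions, while $\mm'=e^{v}\mm$ is comparable to $\mm$ because $v$ is bounded. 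This crude comparison already forces $W^{1,2}(M)=W^{1,2}(M')$ as sets (for instance by applying Lemma \ref{gradient-compare} to the rescaled metric $e^{L}\d\ge\d'$ and its reverse), so the only genuine content left is the sharp factor $e^{-w}$ in the gradient identity.

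The heart of the argument is the local comparison of the two distances. Using continuity of $w$ and the geodesic structure, I would show that for every non-isolated $x$,
\[
\lim_{y\to x}\frac{\d'(x,y)}{\d(x,y)}=e^{w(x)}.
\]
Indeed, given $\eps>0$ there is a neighbourhood of $x$ on which $w$ oscillates by at most $\eps$; for $y$ close enough to $x$ a near-$\d$-geodesic joining them stays in this neighbourhood, which gives $e^{w(x)-\eps}\d(x,y)\le \d'(x,y)\le e^{w(x)+\eps}\d(x,y)$, and letting $\eps\to0$ proves the claim. Since the ratio $\d(x,y)/\d'(x,y)$ converges to the positive constant $e^{-w(x)}$, it factors out of the $\overline{\lim}$ defining the local Lipschitz constant, yielding, for every Lipschitz $f$,
\[
\loclip{f}{\d'}(x)=e^{-w(x)}\,\loclip{f}{\d}(x).
\]

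With this pointwise identity in hand I pass to a general $f\in W^{1,2}(M)$. By Proposition \ref{prop-density} choose Lipschitz functions $f_n$ with $f_n\to f$ and $\loclip{f_n}{\d}\to |\D f|_M$ in $L^2(\mm)$. Each $f_n$ is also $\d'$-Lipschitz, and by the identity above $\loclip{f_n}{\d'}=e^{-w}\loclip{f_n}{\d}\to e^{-w}|\D f|_M$ in $L^2(\mm)$, hence also in $L^2(\mm')$ since $e^{-w}$ and $e^{v}$ are bounded. By the very definition of the minimal weak upper gradient this gives $f\in W^{1,2}(M')$ together with $|\D f|_{M'}\le e^{-w}|\D f|_M$ $\mmae$. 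For the reverse inequality I would invoke reversibility: $M$ is the conformal transform of $M'$ with factors $-w$ and $-v$, so the same estimate applied in that direction gives $|\D f|_M\le e^{w}|\D f|_{M'}$, i.e. $e^{-w}|\D f|_M\le |\D f|_{M'}$. Combining the two bounds yields $|\D f|_{M'}=e^{-w}|\D f|_M$ $\mmae$, which in particular re-proves the set equality.

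Finally, infinitesimal Hilbertianity transfers immediately: it is equivalent to the pointwise parallelogram identity for $|\D\cdot|^2$, and since $|\D f|_{M'}^2=e^{-2w}|\D f|_M^2$ with $e^{-2w}$ a fixed positive function, the identity valid for $|\D\cdot|_M^2$ is preserved for $|\D\cdot|_{M'}^2$, so $W^{1,2}(M')$ is again Hilbert. The main obstacle I anticipate is the second paragraph: making the local comparison $\d'(x,y)/\d(x,y)\to e^{w(x)}$ fully rigorous, which requires controlling the minimizing (or near-minimizing) curves for $\d'$ so that they concentrate near $x$ as $y\to x$, and then verifying that this gives the \emph{sharp} reciprocal factor $e^{-w}$ at the level of the minimal weak upper gradient rather than only the crude two-sided bounds $e^{\pm L}$ furnished by Lemma \ref{gradient-compare}.
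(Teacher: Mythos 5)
Your proposal is correct and follows essentially the same route as the paper's own proof: the pointwise identity $\loclip{f}{M'}=e^{-w}\loclip{f}{M}$ for Lipschitz functions via the continuity of $w$ on small balls, then density in energy (Proposition \ref{prop-density}) with lower semicontinuity to get $|\D f|_{M'}\le e^{-w}|\D f|_M$, and reversibility of the transformation (replacing $w,v$ by $-w,-v$) for the opposite inequality. The ``obstacle'' you flag---confining near-minimizing curves for $\d'$ to a small ball, which your bound $\d'\ge e^{-L}\d$ settles since a curve leaving $B_r(x)$ has $\d'$-length at least $e^{-L}r\gg e^{L}\d(x,y)$---is in fact glossed over in the paper's proof, so your treatment is if anything slightly more careful.
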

\begin{proof}
Let $\epsilon >0$, $x \in X$. Pick $r>0$ such that
\[
 \mathop {\sup}_{y \in B_r(x)}\max \Big{\{}\frac{e^{w(x)}}{e^{w(y)}}, \frac{e^{w(y)}}{e^{w(x)}}\Big{\}}<1+\epsilon,
\]
where $B_r(x)$ is the closed ball in $(X, \d)$ with radius $r$.

Then for any Lipschitz function $g$, we have
\begin{eqnarray*}
\loclip{g}{M'}(x)&=&\lmts{y}{x} \frac{|g(y)-g(x)|}{\d'(y,x)}=\lmts{ B_r(x) \ni y}{x} \frac{|g(y)-g(x)|}{\d'(y,x)}\\
&\leq& (1+\epsilon)e^{-w(x)} \lmts{ B_r(x) \ni y}{x} \frac{|g(y)-g(x)|}{\d(y,x)}\\
&=& (1+\epsilon)e^{-w(x)} \loclip{g}{M}(x).
\end{eqnarray*}
Similarly, we have $\loclip{g}{M'}(x) \geq (1+\epsilon)^{-1}e^{-w(x)} \loclip{g}{M}(x)$. Since the choice of $\epsilon$ is arbitrary, we know
\[
\loclip{g}{M'}(x) =e^{-w(x)} \loclip{g}{M}(x).
\]

Since $e^{v}$ is  bounded and continuous,  we know that $L^2(M)$ coincides with $L^2(M')$.
For any $f\in W^{1,2}(M)$, from Proposition \ref{prop-density}  we know there exits a sequence of Lipschitz functions $\{ f_n\}_n$  such that $f_n \rightarrow f$ and $\loclip{f_n}{M} \rightarrow |\D f|_M$ in $L^2(X, \mm)$. Then we know $f_n \rightarrow f$ and $\loclip{f_n}{M'} \rightarrow e^{-w}|\D f|_M$ in $L^2(X, \mm')$.
Thus from lower semi-continuity we know $|\D f|_{M'} \leq e^{-w}|\D f|_M$, $\mm$-a.e.. Hence $f\in W^{1,2}(M')$ and we have $W^{1,2}(M) \subset W^{1,2}(M')$.

Conversely, we can exchange the roles of $M$ and $M'$, i.e. $M$ can be obtained from $M'$ through conformal transformation, with  the conformal factors  $e^{-w}, e^{-v}$. Using the same argument we can prove $|\D f|_{M'} \geq e^{-w}|\D f|_M$, $\mm$-a.e., and  $W^{1,2}(M') \subset W^{1,2}(M)$,  then we complete the proof.

\end{proof}

\bigskip
\begin{remark}
  General weighted Sobolev space is studied in \cite{APS-W}. Thus we can also characterize the Sobolev space under  conformal transformation for unbounded $v$.  In case $w$ is unbounded, the problem is more complicated. Here we introduce  a possible approach, the idea comes from \cite{APS-W}.

Let $w$ be a continuous function with $e^{2w}\in L^{1}(\mm) \cap L^{-1}_{\rm loc}(\mm)$. We define the weighted Sobolev space $W^{1,2}_w\ms$ by
\[
W^{1,2}_w:=\Big\{ f\in W^{1, 1}\ms: \int |f|^2\,\d\mm+\int |\D f|^2\,e^{-2w}\d\mm<\infty\Big \},
\]
where the definition of $W^{1,1}$ can be found in \cite{Ambrosio-DiMarino14} and \cite{GH-I}.
We endow $W^{1,2}_w\ms$ with the norm:
\[
\|f\|^2_w:=\int |f|^2\,\d\mm+\int |\D f|^2\,e^{-2w}\d\mm.
\]
Using H\"older inequality we can prove that  $W^{1,2}_w$ embeds continuously into $W^{1,1}$. We  define 
$H^{1,2}_w\ms$ as the closure of Lipschitz functions with compact support in $W^{1,2}_w$. 

We may prove  $W^{1,2}_w=W^{1,2}(X, e^w\d, \mm)$ and $H^{1,2}_w=W^{1,2}(X, e^w\d, \mm)$ under further assumption. This is an independent topic, which will be studied in a forthcoming paper.
\end{remark}

\bigskip

%%%%%%%%%%%%%%%%%%%%%%%%%%%%%%%%%%%%%%%%%%%%%%%%%%%%%%%%%%%%%%%%%%%%%%%%%%%%%%%%%%%%%%%%%%%%%%%%%%%%%

  Let $M$ be an infinitesimally Hilbertian space. From the results above, we know $\Gamma'(\cdot, \cdot)=e^{-2w}\Gamma( \cdot, \cdot)$.  The  energy form on $M'$ is defined by
\[
W^{1,2}(M') \ni f \mapsto \int \Gamma'(f)\,\d\mm'=\int |\D f|^2\,e^{v-2w}\,\d\mm.
\]

The Laplacian on $M'$ can be represented in the following way.
\begin{proposition}[Laplacian under conformal transformation] \label{lemma-2}
Assume that $M$ and $M'$ are infinitesimally Hilbertian metric measure spaces. Assume $v,w \in W^{1,2}(M)\cap L^\infty(M)$. Then ${\rm D} ({\bf \Delta}')={\rm D} ({\bf \Delta})$
and ${\rm TestF}(M')={\rm TestF}(M)$.  For any $f \in {\rm D}({\bf \Delta}')$, we have
 \[
{\bf \Delta}'f=e^{v-2w}\big{(}{\bf \Delta}f+\Gamma(v-2w,f)\,\mm\big{)}.
\]

Furthermore, we have
the formula:
\[
{\Delta}'f=e^{-2w}\big({\Delta}f+\Gamma(v-2w,f)\big ),~~~\mmae
\]
where $f \in {\rm TestF}(M')$.
\end{proposition}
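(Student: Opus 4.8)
The plan is to run everything through the integration-by-parts characterisation of the measure-valued Laplacian, converting all $M'$-objects into $M$-objects via the two relations already established: $\Gamma'(\cdot,\cdot)=e^{-2w}\Gamma(\cdot,\cdot)$ (a consequence of Proposition \ref{lemma-1}) and $\mm'=e^{v}\mm$. Combining these, for any Lipschitz $\varphi$ with bounded support one has $\Gamma'(\varphi,f)\,\mm'=e^{v-2w}\Gamma(\varphi,f)\,\mm$, so that ``$f\in{\rm D}({\bf\Delta}')$ with ${\bf\Delta}'f=\mu$'' is equivalent to $\int\varphi\,\mu=-\int e^{v-2w}\Gamma(\varphi,f)\,\mm$ for all such $\varphi$. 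This reduces the whole statement to identifying the right-hand side.

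First I would take $f\in{\rm D}({\bf\Delta})$ and manipulate $-\int e^{v-2w}\Gamma(\varphi,f)\,\mm$ using the Leibniz and chain rules for $\Gamma$ recorded in the Preliminaries. Writing $h:=e^{v-2w}$, the Leibniz rule gives $h\,\Gamma(\varphi,f)=\Gamma(\varphi h,f)-\varphi\,\Gamma(h,f)$, and the chain rule gives $\Gamma(h,f)=e^{v-2w}\Gamma(v-2w,f)$. Hence
\[
-\int e^{v-2w}\Gamma(\varphi,f)\,\mm=-\int\Gamma(\varphi h,f)\,\mm+\int\varphi\,e^{v-2w}\Gamma(v-2w,f)\,\mm.
\]
Applying the defining identity of ${\bf\Delta}f$ to the test function $\varphi h$ turns the first term into $\int\varphi\,e^{v-2w}\,{\bf\Delta}f$, and collecting the two terms yields exactly ${\bf\Delta}'f=e^{v-2w}\big({\bf\Delta}f+\Gamma(v-2w,f)\,\mm\big)$. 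The reverse inclusion ${\rm D}({\bf\Delta}')\subset{\rm D}({\bf\Delta})$, hence the equality of domains, follows from the reversibility of the conformal transform (exchange the roles of $M$ and $M'$, replacing $w,v$ by $-w,-v$), exactly as in the proof of Proposition \ref{lemma-1}. The density formula is then immediate: since $\mm=e^{-v}\mm'$, dividing the measure identity by $\mm'$ gives $\Delta'f=e^{-2w}\big(\Delta f+\Gamma(v-2w,f)\big)$ once $f\in{\rm TestF}(M')$ guarantees ${\bf\Delta}f\ll\mm$.

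To close the statement I would verify ${\rm TestF}(M')={\rm TestF}(M)$. The $L^\infty$ classes coincide because $\mm$ and $\mm'$ are mutually absolutely continuous with densities bounded above and below ($v$ being bounded); the condition $|\D f|_{M'}\in L^\infty$ is equivalent to $|\D f|_M\in L^\infty$ because $|\D f|_{M'}=e^{-w}|\D f|_M$ with $e^{-w}$ bounded above and below; and membership in ${\rm D}({\bf\Delta}')$ is already settled. The remaining point is $\Delta'f\in W^{1,2}(M')=W^{1,2}(M)$, which I would obtain from the explicit formula for $\Delta'f$ together with the calculus rules for products and $\Gamma$ in the test-function algebra.

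The main obstacle is a regularity matter concentrated in one spot: the step $-\int\Gamma(\varphi h,f)\,\mm=\int\varphi h\,{\bf\Delta}f$ requires $\varphi h=\varphi\,e^{v-2w}$ to be an admissible multiplier, whereas $v,w$ are a priori only continuous and bounded. I expect this to be handled by first extending the defining identity of ${\bf\Delta}f$ from Lipschitz test functions to multipliers in $W^{1,2}(M)\cap L^\infty(M)$ with bounded support, via the energy-density approximation of Proposition \ref{prop-density}; this forces the (implicit) standing assumption that $v-2w$ be locally Sobolev, so that both $\Gamma(v-2w,f)$ is well defined and $e^{v-2w}$ is a legitimate $W^{1,2}$-multiplier. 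Once this admissibility is secured, every other step is a routine application of the bilinear calculus for $\Gamma$ and ${\bf\Delta}$.
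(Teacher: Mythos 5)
Your proposal follows essentially the same route as the paper: both reduce everything to the integration-by-parts characterisation of ${\bf\Delta}'$, compute $-\int e^{v-2w}\Gamma(\varphi,f)\,\mm$ via the Leibniz rule together with the chain rule $\Gamma(e^{v-2w},f)=e^{v-2w}\Gamma(v-2w,f)$, apply the defining identity of ${\bf\Delta}f$ to the multiplier $e^{v-2w}\varphi$, and obtain the reverse inclusion by the reversibility of the conformal transform, exactly as in Proposition \ref{lemma-1}. The only difference is one of care rather than method: you explicitly flag that $\varphi\,e^{v-2w}$ must be an admissible test function (so that $v-2w$ needs some Sobolev regularity beyond mere continuity and boundedness) and sketch how to secure this by density in energy, whereas the paper applies the identity to this multiplier silently.
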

\begin{proof}
Let $f \in {\rm D} ({\bf \Delta})$. By definition, there exists a measure ${\bf \Delta} f$ such that
\[
\int \phi \, \d {\bf \Delta} f=-\int \Gamma(\phi, f)\,\d\mm
\]
for any Lipschitz function $\phi$ with bounded support.

Thus for any Lipschitz function $\varphi$ with bounded support, we have
\begin{eqnarray*}
&&\int \varphi e^{v-2w}\big{(}\d{\bf \Delta}f+\Gamma(v-2w,f)\,\d\mm\big{)}\\ &=&\int e^{v-2w}\varphi\, \d{\bf \Delta}f+\int \varphi \Gamma(v-2w,f)\,e^{v-2w}\,\d\mm \\
&=& -\int \Gamma(e^{v-2w}\varphi, f)\,\d\mm+\int \varphi \Gamma(v-2w,f)\,e^{v-2w}\,\d\mm\\
&=& -\int e^{v-2w}\Gamma(\varphi, f)\,\d\mm+\int \varphi e^{v-2w}\Gamma(2w-v, f)\,\d\mm+\int \varphi \Gamma(v-2w,f)\,e^{v-2w}\,\d\mm\\
&=& -\int e^{v-2w}\Gamma(\varphi, f)\,\d\mm\\
&=& -\int \Gamma'(\varphi, f)\,\d\mm'.
\end{eqnarray*}
Therefore we know $f\in {\rm D} ({\bf \Delta}')$, and by uniqueness we have
 \[
{\bf \Delta}'f=e^{v-2w}({\bf \Delta}f+\Gamma(v-2w,f)\,\mm).
\]

Conversely, we can prove ${\rm D} ({\bf \Delta}')\subset {\rm D} ({\bf \Delta})$. Combining with Proposition \ref{lemma-1} we get ${\rm TestF}(M')={\rm TestF}(M)$. When ${\bf \Delta}'f \ll \mm'$, we know ${\bf \Delta}f \ll \mm$ and
\[
{\Delta}'f=e^{-2w}({\Delta}f+\Gamma(v-2w,f)),~~~\mmae.
\]
\end{proof}
\bigskip
\begin{proposition}[Gradient under conformally transformation]  \label{lemma-3}
Let $M$ and $M'$ be  metric measure spaces as discussed above. Then
$\nabla' f=e^{-2w}\nabla f$
and $\la X, Y \ra_{M'}=e^{2w} \la X, Y \ra$, $\mm$-a.e.
\end{proposition}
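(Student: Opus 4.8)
The plan is to leverage Proposition \ref{lemma-1}, which gives $W^{1,2}(M)=W^{1,2}(M')$ and $|\D f|_{M'}=e^{-w}|\D f|_M$, together with the boundedness of $v,w$, in order to identify the underlying (co)tangent modules of $M$ and $M'$ as $L^\infty$-modules and then simply track how the two pointwise inner products differ. The guiding principle is that the duality pairing between tangent and cotangent fields is a purely algebraic evaluation, independent of the metric, whereas the Hilbert-module inner products carry the conformal factor.

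First I would record the module-level identifications. Since $v$ is bounded, $\mm'=e^{v}\mm$ and $\mm$ are mutually absolutely continuous with densities bounded away from $0$ and $\infty$, so $L^p(\mm)=L^p(\mm')$ and the $L^\infty$-module structures for $M$ and $M'$ coincide. The equivalence relation defining $\mathcal{PCM}/\!\sim$ involves only the vanishing of $|\D(\cdot)|$, which is unaffected by the strictly positive factor $e^{-w}$; hence $L^2(T^*M)$ and $L^2(T^*M')$ have the same underlying set and the same module operations, and differ only in their pointwise norm. Concretely, using $\Gamma'(\cdot,\cdot)=e^{-2w}\Gamma(\cdot,\cdot)$ already noted above, one has $\la \d f,\d g\ra_{M'}=e^{-2w}\la \d f,\d g\ra_{M}$ $\mm$-a.e. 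Dualizing, and using that $e^{\pm w}\in L^\infty$ so that the bounded homomorphisms are the same, the tangent modules $L^2(TM)=\mathrm{Hom}_{L^\infty}(L^2(T^*M),L^1)$ and $L^2(TM')$ also coincide as $L^\infty$-modules, with a common evaluation pairing $T(\omega)$.

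Next I would prove the gradient formula directly from the defining property of $\nabla$. By definition $\nabla g$ is the unique element of $L^2(TM)$ with $\nabla g(\d f)=\la \d f,\d g\ra_M=\Gamma(f,g)$ for all $f\in W^{1,2}(M)$, while $\nabla' g$ is characterized by $\nabla' g(\d f)=\la \d f,\d g\ra_{M'}=e^{-2w}\Gamma(f,g)$. Applying the module action $(h\cdot T)(\omega)=h\,T(\omega)$ with $h=e^{-2w}\in L^\infty$ gives $(e^{-2w}\nabla g)(\d f)=e^{-2w}\Gamma(f,g)=\nabla' g(\d f)$ for every $f$. Since the differentials $\{\d f\}$ generate the cotangent module, the two tangent fields agree, yielding $\nabla' g=e^{-2w}\nabla g$. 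Finally I would read off the tangent inner product through the Riesz isometry $L^2(TM')\cong L^2(T^*M')$: we have $\la \nabla' f,\nabla' g\ra_{M'}=\la \d f,\d g\ra_{M'}=e^{-2w}\Gamma(f,g)$, whereas $\la \nabla f,\nabla g\ra_M=\Gamma(f,g)$. Substituting $\nabla' f=e^{-2w}\nabla f$ and extracting the $L^\infty$ factors from the bilinear pairing gives $e^{-4w}\la \nabla f,\nabla g\ra_{M'}=e^{-2w}\Gamma(f,g)$, hence $\la \nabla f,\nabla g\ra_{M'}=e^{2w}\la \nabla f,\nabla g\ra_{M}$; by density of the test/gradient vectors $\{\sum_i a_i\nabla f_i\}$ in $L^2(TM)$ and $L^\infty$-bilinearity this extends to $\la X,Y\ra_{M'}=e^{2w}\la X,Y\ra$ for all $X,Y$. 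I expect the only real subtlety to be the bookkeeping in the second paragraph: one must argue carefully that the tangent modules are genuinely the same object and that their evaluation pairing is metric-independent, so that the scalings $e^{-2w}$ on covectors and $e^{+2w}$ on vectors are consistent rather than contradictory.
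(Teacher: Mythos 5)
Your proof is correct and follows essentially the same route as the paper's: both derive $\nabla' g=e^{-2w}\nabla g$ from the defining duality $\nabla' g(\d f)=\Gamma'(f,g)=e^{-2w}\Gamma(f,g)$ together with the generating property of differentials, and then obtain $\la X,Y\ra_{M'}=e^{2w}\la X,Y\ra$ first on gradients and then for all $X,Y$ by $L^\infty$-bilinearity and density. The only difference is that you make explicit the preliminary identification of the (co)tangent modules of $M$ and $M'$ and of the evaluation pairing, which the paper's proof leaves implicit.
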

\begin{proof}
Let $f, g\in W^{1,2}(M)=W^{1,2}(M')$. From definition we know
\[
\d f(\nabla' g)=\la \nabla' f, \nabla' g \ra_{M'}
\]
and
\[
\la \nabla' f, \nabla' g \ra_{M'}=\Gamma'(f,g)=e^{-2w}\Gamma(f,g)=e^{-2w}\la \nabla f, \nabla g \ra_{M}=e^{-2w}\d f(\nabla g).
\]
Then we know $\d f(\nabla' g)=e^{-2w}\d f(\nabla g)$, therefore, $\nabla' g=e^{-2w} \nabla g$.

Furthermore, we have
\[
\la X, Y \ra_{M'}=e^{2w}\la X, Y \ra_{M},
\]
for any $X=\nabla f, Y=\nabla g$, $f, g\in W^{1,2}(M)$. By linearity and the density we know $\la X, Y \ra_{M'}=e^{2w}\la X, Y \ra_{M}$ holds
for any $X, Y \in L^2(TM)$.
\end{proof}

We then have the following  corollary.

\begin{corollary}[Conformal transformation preserves angle]\label{prop:angle}
Let $X, Y \in L^2(TM)$. The conformal transformation preserves the angle between $X$ and $Y$ which is  defined by
\[
\angle XY (x):=\arccos \frac{\la X, Y \ra}{|X||Y|}(x).
\]
\end{corollary}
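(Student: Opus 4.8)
The plan is to derive the angle-preservation property as a routine consequence of the scaling behaviour of the pointwise inner product established in Proposition \ref{lemma-3}. The key observation is that under the conformal transformation the inner product and the pointwise norms all rescale by the \emph{same} factor $e^{2w}$, so that factor cancels in the quotient $\la X, Y \ra / (|X||Y|)$ defining the cosine of the angle. Since this cancellation is exact and pointwise $\mm$-a.e., the angle function is literally unchanged.

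Concretely, first I would recall from Proposition \ref{lemma-3} that $\la X, Y \ra_{M'} = e^{2w} \la X, Y \ra_M$ holds $\mm$-a.e. for all $X, Y \in L^2(TM)$. Specializing to $Y = X$ gives the norm scaling $|X|_{M'}^2 = e^{2w} |X|_M^2$, hence $|X|_{M'} = e^{w} |X|_M$ and likewise $|Y|_{M'} = e^{w}|Y|_M$, $\mm$-a.e. (here one uses that $e^{w}$ is positive so the square root is unambiguous). Then I would compute the cosine of the angle in $M'$ directly:
\[
\frac{\la X, Y \ra_{M'}}{|X|_{M'}\,|Y|_{M'}} = \frac{e^{2w}\la X, Y \ra_M}{e^{w}|X|_M \cdot e^{w}|Y|_M} = \frac{\la X, Y \ra_M}{|X|_M\,|Y|_M}, \qquad \mmae
\]
Applying $\arccos$ to both sides yields $\angle XY$ computed in $M'$ equals $\angle XY$ computed in $M$, $\mm$-a.e., which is precisely the claim.

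There is no serious obstacle here; the statement is essentially an immediate algebraic corollary. The only minor point worth flagging is that the angle is defined only where $|X|$ and $|Y|$ are both nonzero, so the identity of cosines (and hence of angles) should be understood to hold $\mm$-a.e. on the set where both pointwise norms are positive. Since multiplication by the everywhere-positive scalar $e^{w}$ does not change the vanishing locus of a pointwise norm, this set is the same whether measured in $M$ or in $M'$, so no ambiguity arises. This is the delicate-but-trivial step: confirming that the domain of definition of the angle function is intrinsic and unaffected by the conformal factor.
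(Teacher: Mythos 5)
Your proof is correct and follows exactly the route the paper intends: the paper states this as a direct corollary of Proposition \ref{lemma-3} (with no written proof), and your computation --- that $\la X,Y\ra_{M'}=e^{2w}\la X,Y\ra$ forces $|X|_{M'}=e^{w}|X|$, so the factor $e^{2w}$ cancels in the cosine quotient --- is precisely the intended argument. Your remark that the vanishing locus of the pointwise norms is unchanged, so the angle's domain of definition is intrinsic, is a sensible extra precision beyond what the paper records.
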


The following property is a crucial condition for non-smooth Bakry-\'Emery theory (see \cite{AGS-B}, \cite{S-S}, \cite{GH-W}).
\bigskip
\begin{proposition}[Conformal transformation preserves Sobolev-to-Lipschitz property] \label{lemma-4}
Let $M$ be a $\rcdkn$ metric measure space with $N <\infty$, and $M'$ be constructed by conformal transformation  as in Proposition \ref{lemma-1}. Then $M'$  satisfies the Sobolev-to-Lipschitz property.
\end{proposition}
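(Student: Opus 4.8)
The plan is to verify the Sobolev-to-Lipschitz property for $M'$ directly from its definition, exploiting the identification of Sobolev spaces established in Proposition \ref{lemma-1} and the fact that the two metrics $\d$ and $\d'$ are bi-Lipschitz equivalent on relatively compact sets because $w$ is bounded and continuous. Concretely, suppose $f \in W^{1,2}(M')$ with $|\D f|_{M'} \in L^\infty(\mm')$. By Proposition \ref{lemma-1} we have $f \in W^{1,2}(M)$ and the pointwise relation $|\D f|_{M'} = e^{-w}|\D f|_M$ holds $\mm$-a.e., so the boundedness of $|\D f|_{M'}$ together with the lower bound on $e^{-w}$ (from boundedness of $w$) yields $|\D f|_M \in L^\infty(\mm)$. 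Since $M$ is $\rcdkn$ with $N < \infty$, it is in particular $\rcd$ and hence enjoys the Sobolev-to-Lipschitz property. Therefore I obtain a $\d$-Lipschitz representative $\bar f$ of $f$ with $\Lip_\d(\bar f) = \esup |\D f|_M$.

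\textbf{From $\d$-Lipschitz to the sharp $\d'$-Lipschitz constant.} The first thing to check is that $\bar f$ is automatically $\d'$-Lipschitz; this is immediate because $\d' = e^w \d \geq e^{-\|w\|_\infty}\d$, so a $\d$-Lipschitz function is $\d'$-Lipschitz with a (nonsharp) constant. The real content is the sharp identity $\Lip_{\d'}(\bar f) = \esup |\D f|_{M'}$. The plan is to reduce the global Lipschitz constant to the supremum of the local slope, using that $(X, \d')$ is geodesic (by the basic proposition preceding Proposition \ref{lemma-1}), so that $\Lip_{\d'}(\bar f) = \sup_x \loclip{\bar f}{M'}(x)$. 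The pointwise comparison of local slopes derived inside the proof of Proposition \ref{lemma-1}, namely $\loclip{g}{M'}(x) = e^{-w(x)}\loclip{g}{M}(x)$ for Lipschitz $g$, then transfers the slope computation from $M$ to $M'$ pointwise. Combining this with the characterization $\loclip{\bar f}{M}(x) \leq$ (the upper gradient) and the Sobolev-to-Lipschitz identity on $M$, which forces $\esup_x \loclip{\bar f}{M}(x) = \esup |\D f|_M$, I expect to obtain $\sup_x \loclip{\bar f}{M'}(x) = \esup\, e^{-w}|\D f|_M = \esup |\D f|_{M'}$.

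\textbf{Main obstacle.} The delicate point is the matching of the two one-sided inequalities into the sharp equality $\Lip_{\d'}(\bar f) = \esup |\D f|_{M'}$. The inequality $\Lip_{\d'}(\bar f) \geq \esup |\D f|_{M'}$ is the general Sobolev inequality (the global Lipschitz constant always dominates the essential supremum of the weak gradient, since $|\D \bar f|_{M'} \leq \lip_{M'}(\bar f)$ a.e.). The reverse inequality is where the geometry enters: I must pass from the \emph{pointwise} identity of local slopes, valid everywhere $\bar f$ is Lipschitz, to a bound on the \emph{global} $\d'$-slope, and here the possible mismatch between the pointwise-everywhere slope of the continuous representative $\bar f$ and the $\mm$-essential supremum of $|\D f|_M$ is the crux. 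The honest subtlety is that $\esup_x \loclip{\bar f}{M}(x)$ need not equal $\sup_x \loclip{\bar f}{M}(x)$ a priori; closing this gap is exactly what the Sobolev-to-Lipschitz property on $M$ provides, since by definition the representative it furnishes satisfies $\Lip_\d(\bar f) = \esup |\D f|_M$ with the supremum attained in the essential sense. I would therefore feed the already-optimal representative $\bar f$ from $M$ into the $\d'$-slope computation, rather than re-optimizing, so that the sharp constant on $M$ propagates through the weight $e^{-w}$ to give the sharp constant on $M'$.
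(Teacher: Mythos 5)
There is a genuine gap at precisely the point you flag as the ``main obstacle,'' and the ingredient you invoke to close it does not suffice. The Sobolev-to-Lipschitz property of $M$ furnishes only the \emph{global} identity $\Lip_{M}(\bar f)=\esup{|\D f|_M}$; feeding this into the pointwise slope identity $\loclip{\bar f}{M'}(x)=e^{-w(x)}\loclip{\bar f}{M}(x)$ and taking the supremum yields only
\[
\Lip_{M'}(\bar f)\;\le\;\Big(\sup_x e^{-w(x)}\Big)\cdot \esup{|\D f|_M},
\]
in which the two suprema decouple: unless $w$ is constant, this product can be strictly larger than $\esup{\,e^{-w}|\D f|_M}=\esup{|\D f|_{M'}}$ (think of $e^{-w}$ large exactly where $|\D f|$ is small, and vice versa). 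Nor does the a.e.\ identity $\loclip{\bar f}{M}=|\D f|_M$ (Cheeger-type) rescue the argument: the quantity you must control, $\Lip_{M'}(\bar f)=\sup_x\loclip{\bar f}{M'}(x)$, is a \emph{true} supremum over all points, an $\mm$-a.e.\ bound says nothing about the slope on a null set, and you cannot integrate an $\mm$-a.e.\ gradient bound along a single geodesic, which is itself $\mm$-negligible. So your phrase ``closing this gap is exactly what the Sobolev-to-Lipschitz property on $M$ provides'' is where the proof breaks: that property is global, while the conformal factor forces a local matching of $e^{-w}$ against $|\D f|$.

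What the paper proves instead, and what is missing from your outline, is a \emph{localized, everywhere-pointwise} version of the Sobolev-to-Lipschitz bound: for every $x$ and every $r>0$, $\loclip{\bar f}{M}(x)\le \mathop{\esup{}}_{y\in B_r(x)}|\D f|(y)$. This is where the $\rcdkn$ geometry with $N<\infty$ genuinely enters: one takes the $W_2$-geodesic from $\frac1{\mm(B_\eps(y))}\mm\restr{B_\eps(y)}$ to $\delta_x$, invokes Rajala's theorem that the interpolating measures $\mu_t$, $t\in[0,1-\eta]$, have uniformly bounded densities, and uses the test-plan characterization of Sobolev functions to estimate $|\int \bar f\,\d\mu_{1-\eta}-\int \bar f\,\d\mu_0|$ by integrating $|\D f|$ over the pencil of geodesics; the plan's absolutely continuous marginals make the $\mm$-a.e.\ bound on $|\D f|$ usable, which a single geodesic never could. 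With this local estimate holding at \emph{every} point, the continuity of $w$ lets one pass $e^{-w(x)}$ inside the shrinking essential supremum, $e^{-w(x)}\lim_{r\to 0}\mathop{\esup{}}_{y\in B_r(x)}|\D f|(y)=\lim_{r\to 0}\mathop{\esup{}}_{y\in B_r(x)}e^{-w(y)}|\D f|(y)\le \esup{|\D f|_{M'}}$, and one concludes via $\Lip_{M'}(\bar f)=\sup_x\loclip{\bar f}{M'}(x)$, which requires $(X,\d')$ to be geodesic — itself obtained from local compactness via Bishop--Gromov, another place where $N<\infty$ is used. Your reduction to $M$ via Proposition \ref{lemma-1} and the trivial inequality $\Lip_{M'}(\bar f)\ge \esup{|\D f|_{M'}}$ do match the paper; the optimal-transport lemma above is the actual content of the proof and must be supplied.
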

\begin{proof}
From Bishop-Gromov inequality we know that $\rcdkn$ spaces are proper. So  we know  $M'$ is  geodesic from Proposition \ref{prop:geod}. Let $f \in W^{1,2}(M')$  with $|\D f|_{M'} \in L^\infty(M')$. By Proposition \ref{lemma-1}, we have $|\D f|_{M'}=e^{-w} |\D f| \in L^\infty(M')$. Since $M$ has Sobolev-to-Lipschitz property (see \cite{AGS-M}),  $f$ has a  Lipschitz representation $f'$ on  $M$,  and $\Lip_{M}(f')=\esup{} ~|\D f|$.  So  $f'$ is also Lipschitz on  $M'$.

We claim that 
$\loclip{f'}{M}(x)\leq \mathop{\esup{}}_{y\in B_r(x)} ~|\D f|(y)$ on any closed ball $B_r(x)$.  For any $\epsilon >0$ and $y\in B_{r-\epsilon}(x)$, we know $B_\epsilon(y) \subset B_r(x)$. Then we consider the optimal transport from $\frac1{\mm(B_\epsilon(y))}\mm\restr{B_\epsilon(y)}$ to $\delta_x$. We know that there exists a geodesic $(\mu_t)$ connecting them,  and there exists an optimal transport plan $\Pi\in \mathcal{P}({\rm Geod}(X, \d))$ such that $(e_t)_\sharp\Pi=\mu_t$. It is known (see \cite{R-IG, R-IM}) that  $\mu_t,  t\in [0, 1-\eta]$ have uniformly bounded densities for any $\eta>0$. By an equivalent characterization of Sobolev function (see \cite{AGS-C}) we have
\begin{eqnarray*}
\Big |\int f'\,\mu_{1-\eta}-\int f'\,\mu_0\Big |&=& \Big | \int \big(f'(\gamma_{1-\eta})-f'(\gamma_0)\big )\,\d\Pi(\gamma) \Big | \\
&\leq&  \int \int_0^{1-\eta} |\D f|(\gamma_t)|\dot{\gamma}_t|\,\d t \d \Pi(\gamma)      \\
&\leq&   \Big ( \int  \int_0^{1-\eta} |\D f|^2(\gamma_t)\,\d t \d \Pi(\gamma)   \Big)^\frac12 \Big ( \int  \int_0^{1-\eta} |\dot{\gamma}_t|^2\,\d t \d \Pi(\gamma)   \Big)^\frac12\\
&=&  \Big ( \int_0^{1-\eta}\int   |\D f|^2\, \d \mu_t\d t  \Big)^\frac12 \Big (  \int (1-\eta)\d^2(\gamma_1, \gamma_0)\,\d \Pi(\gamma)   \Big)^\frac12 \\
&\leq & (1-\eta)\big ( \mathop{\esup{}}_{y\in B_r(x)} ~|\D f|(y)\big ) (\d(x,y)+\epsilon).
\end{eqnarray*}
Letting $\eta \to 0$,  and $\epsilon \to 0$, we have
\[
\Big | \frac {f'(x)-f'(y)}{\d(x,y)}\Big | \leq  \mathop{\esup{}}_{y\in B_r(x)} ~|\D f|(y).
\]
Taking $y\to x$, we get 
\begin{equation}\label{eq1:slp}
\loclip{f'}{M}(x)\leq \mathop{\esup{}}_{y\in B_r(x)} ~|\D f|(y).
 \end{equation}

 We need  to prove
$\Lip_{M'} (f')=\esup ~{|\D f|_{M'}}=\esup ~{e^{-w} |\D f|}$. As the inequality $\Lip_{M'} (f') \geq \esup ~{|\D f|_{M'}}$ is trivial, we just need to prove the opposite one.

From the proof of Proposition \ref{lemma-1} we know
\[
\loclip{f'}{M'}(x) =e^{-w(x)} \loclip{f'}{M}(x).
\]

Combining with \eqref{eq1:slp}, we have 
\begin{eqnarray*}
\loclip{f'}{M'}(x) &=& e^{-w(x)}\loclip{f'}{M}(x)\\
&\leq&  e^{-w(x)} \lmt{r}{0}\mathop{\esup{}}_{y \in B_r(x)} |\D f|(y) \\
&=& \lmt{r}{0}\mathop{\esup{}}_{y \in B_r(x)} e^{-w(y)}|\D f|(y) \\
&\leq& \esup ~{ e^{-w(y)}|\D f|(y) }\\
&=& \esup~{ |\D f|_{M'}}.
\end{eqnarray*}
 
 Since $ M'$ is a geodesic metric  space, we know 
$\Lip_{M'} (f')=\sup_x \loclip{f'}{M'}(x)$, hence $\Lip_{M'} (f') \leq \esup ~{|\D f|_{M'}}$.  
In conclusion, we obtain  $\Lip_{M'} (f') = \esup ~{|\D f|_{M'}}$ and we complete the proof.
\end{proof}

%%%%%%%%%%%%%%%%%%%%%%%%%%%%%%%%%%%%%%%%%%%%%%%%%%%%%%%%%%%%%%%%%%
%% ä¸»è¦ç»æ
%%%%%%%%%%%%%%%%%%%%%%%%%%%%%%%%%%%%%%%%%%%%%%%%%%%%%%%%%%%%%%%%

\subsection{Ricci curvature tensor under conformal transformation}
In this section we study the Ricci tensor under conformal transformation, and  prove the transformation formula  in Theorem \ref{th-conformal}.

First of all, from Proposition \ref{finite dim}  we know that (see  \cite{H-R}, and \cite{G-N} for the case $N=\infty$) the following $N$-Ricci tensor $\bRicn$ is well-defined.
\begin{definition}[$N$-Ricci tensor] \label{Ricn} For any $f \in {\rm TestF}(M)$, we define the $N$-Ricci tensor  $\bRicn(\nabla f, \nabla f) \in {\rm Meas}(M)$  by
\begin{eqnarray*}
\bRicn(\nabla f, \nabla f):={\bf \Gamma}_2(f) -\Big{(} |\H_f|^2_{\rm HS}+\frac1{N-{\dim_{\rm loc}}}(\tr \H_f -\Delta f)^2 \Big{)} \, \mm.
\end{eqnarray*}
\end{definition}
\bigskip
We  recall the following result which has been proven in  Theorem 4.4, in \cite{H-R}. Here we modify the statement of the theorem according to Proposition \ref{becondition}. 

\begin{theorem} \label{th-ricci}
Let $M$ be a $\rcdkn$ space. Then
\[
\bRicn(\nabla f, \nabla f) \geq K|\D f|^2\,\mm
\]
 and
\begin{eqnarray}\label{ineq-main}
{\bf \Gamma}_2(f) &\geq & \Big{(} \frac{(\Delta f)^2}{N}\Big{)}\,\mm+\bRicn(\nabla f, \nabla f) 
\end{eqnarray}
hold for any $ f \in {\rm TestF}(M)$.
Conversely, let $M$ be  an  infinitesimal Hilbert space satisfying Assumption \ref{assumption} and the Sobolev-to-Lipschitz property, ${\rm TestF}(M)$ be dense in $W^{1,2}(M)$. Assume that
\begin{itemize}
\item [(1)] ${\rm dim}_{\rm max} M \leq N$,
\item [(2)] $\tr \H_f=\Delta f~ \mmae \text{ on}~~ \{{\dim}_{\rm loc}=N\}, \forall f \in {\rm TestF}(M)$,
\item [(3)]  $\bRicn(\nabla f, \nabla f)  \geq K|\D f|
^2\,\mm$,    $\forall f \in {\rm TestF}(M)$,
\end{itemize}
for some $K \in \R$, $N \in [1,+\infty]$, then $M$ is  $\rcdkn$.
\end{theorem}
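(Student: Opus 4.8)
The plan is to obtain both implications from a single analytic input---the self-improved (dimensional) Bochner inequality---together with one pointwise algebraic estimate, using Proposition \ref{becondition} as the bridge to the $\rcdkn$ condition.

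For the forward implication I would invoke the self-improvement of the Bakry-\'Emery inequality: on an $\rcdkn$ space one has, for every $f \in {\rm TestF}(M)$,
\[
{\bf \Gamma}_2(f) \geq \Big( K|\D f|^2 + |\H_f|^2_{\rm HS} + \frac1{N-\dim_{\rm loc}}(\tr \H_f - \Delta f)^2 \Big)\,\mm.
\]
This strengthening of the plain Bochner inequality of Proposition \ref{becondition} is the deep ingredient; it is produced by the standard self-improvement argument (testing the inequality against perturbations $\varphi \circ f$ and optimizing, which generates the Hessian term, refined dimensionally via Proposition \ref{finite dim}). Granting it, subtracting the bracketed Hessian and trace terms yields exactly $\bRicn(\nabla f, \nabla f) \geq K|\D f|^2\,\mm$ by Definition \ref{Ricn}.

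For inequality \eqref{ineq-main} I would argue purely algebraically. Definition \ref{Ricn} gives
\[
{\bf \Gamma}_2(f) - \bRicn(\nabla f, \nabla f) = \Big( |\H_f|^2_{\rm HS} + \frac1{N-\dim_{\rm loc}}(\tr \H_f - \Delta f)^2 \Big)\,\mm,
\]
so it suffices to prove the pointwise bound
\[
|\H_f|^2_{\rm HS} + \frac1{N-\dim_{\rm loc}}(\tr \H_f - \Delta f)^2 \geq \frac{(\Delta f)^2}{N}, \qquad \mmae.
\]
On each stratum $\{\dim_{\rm loc}=n\}$, Cauchy-Schwarz in the unit orthogonal basis gives $|\H_f|^2_{\rm HS} \geq (\tr \H_f)^2/n$; substituting and minimizing the resulting quadratic in the free quantity $\tr \H_f$ (minimum attained at $\tr \H_f = n\,\Delta f/N$) produces exactly the lower bound $(\Delta f)^2/N$. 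This is the one genuine computation in the proof.

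For the converse, assumptions (1) and (2) are precisely what make $\bRicn$ well-posed: (1) ensures $N-\dim_{\rm loc}\geq 0$ everywhere, so the only delicate locus is $\{\dim_{\rm loc}=N\}$, where (2) forces $\tr \H_f - \Delta f = 0$ and the term $\frac1{N-\dim_{\rm loc}}(\tr \H_f - \Delta f)^2$ is read as $0$. Hypothesis (3), $\bRicn \geq K|\D f|^2\,\mm$, combined with Definition \ref{Ricn}, is then exactly the self-improved Bochner inequality displayed above; applying the same pointwise estimate discards the Hessian terms and leaves ${\bf \Gamma}_2(f) \geq (K|\D f|^2 + (\Delta f)^2/N)\,\mm$ for all test functions, whereupon Proposition \ref{becondition} upgrades this to the $\rcdkn$ condition. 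I expect the main obstacle to be the forward direction's reliance on the self-improvement of Bochner---the genuinely hard analytic step---whereas the converse is essentially bookkeeping of the well-definedness conditions plus the elementary quadratic optimization.
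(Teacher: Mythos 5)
Your proposal is correct and takes essentially the same route as the paper: the paper recalls this statement as Theorem 4.4 of \cite{H-R} without an inline proof (the content of that citation being exactly the self-improved dimensional Bochner inequality you invoke), and its only remark---that the converse is ``nothing but a part of Proposition \ref{becondition}'' with hypotheses (1) and (2) serving solely to make $\bRicn$ well posed---is precisely your converse argument. Your one computation, the pointwise bound $\frac{t^2}{n}+\frac{(t-d)^2}{N-n}\geq\frac{d^2}{N}$ with $t=\tr \H_f$, $d=\Delta f$, minimized at $t=nd/N$ (and read as $|\H_f|^2_{\rm HS}\geq(\Delta f)^2/N$ on $\{\dim_{\rm loc}=N\}$ via $\tr \H_f=\Delta f$), is the correct and standard algebra linking the Ricci-tensor formulation to the Bakry--\'Emery inequality \eqref{ineq-main}.
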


\bigskip
According to the Definition \ref{Ricn}, we need to compute the Hilbert-Schmidt norm of the Hessian under conformal transformation. We have the
following lemma. Notice that the factor $e^v$ has nothing to do with the Hessian.

\begin{proposition}[Hessian under conformal transformation]\label{Hessian}
Let $M=\ms$ be a $\rcdkn$ metric measure space, $e^{w}$ be a conformal factor with $w\in {\rm TestF}(M)$. Then for any $f \in {\rm TestF}(M)$, the following formulas
\begin{eqnarray*}
|\H'_f|^2_{\rm HS}=e^{-4w}\big{(}|\H_f|^2_{\rm HS}&+&2\Gamma(f)\Gamma(w)+({\dim_{\rm loc}}-2) \Gamma(f,w)^2\\
&-&2\Gamma(w,\Gamma(f))+2\Gamma(f,w)\tr \H_f \big{)}
\end{eqnarray*}
and
\[
\tr \H'_f=e^{-2w} \big{(} \tr \H_f+({\dim_{\rm loc}}-2) \Gamma(f,w) \big{)}
\]
hold $\mm$-a.e. .
\end{proposition}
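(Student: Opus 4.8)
The plan is to reduce everything to a single pointwise formula for the conformally transformed Hessian and then contract it against the canonical orthonormal frame. First I would unfold the defining identity
\[
2\H'_f(\nabla' g,\nabla' h)=\Gamma'(g,\Gamma'(f,h))+\Gamma'(h,\Gamma'(f,g))-\Gamma'(f,\Gamma'(g,h))
\]
and substitute $\Gamma'(\cdot,\cdot)=e^{-2w}\Gamma(\cdot,\cdot)$ from Proposition \ref{lemma-3}. Each summand has the shape $e^{-2w}\Gamma(g,e^{-2w}\Gamma(f,h))$; applying the Leibniz rule together with the chain rule $\Gamma(g,e^{-2w})=-2e^{-2w}\Gamma(g,w)$ (from $\d(\varphi\circ w)=\varphi'\circ w\,\d w$) factors out $e^{-4w}$ and produces the unperturbed term $\Gamma(g,\Gamma(f,h))$ plus a correction $-2\Gamma(f,h)\Gamma(g,w)$. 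The three unperturbed terms reassemble into $2\H_f(\nabla g,\nabla h)$, while the substitution also gives $\H'_f(\nabla' g,\nabla' h)=e^{-4w}\H'_f(\nabla g,\nabla h)$ because $\nabla'=e^{-2w}\nabla$. Cancelling $e^{-4w}$ then yields the pointwise bilinear identity
\[
\H'_f(X,Y)=\H_f(X,Y)-\la\nabla f,Y\ra\la X,\nabla w\ra-\la\nabla f,X\ra\la Y,\nabla w\ra+\la X,Y\ra\la\nabla f,\nabla w\ra,
\]
valid first for gradient fields $X=\nabla g,\ Y=\nabla h$ and then, by $L^\infty$-bilinearity and density, for all $X,Y\in L^2(TM)$.

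Next I would pass to the Hilbert--Schmidt norm and trace. The key observation is that, since $\la\cdot,\cdot\ra_{M'}=e^{2w}\la\cdot,\cdot\ra$, the $M'$-orthonormal frame is $e'_i=e^{-w}e_{i,n}$, where $\{e_{i,n}\}$ is the canonical basis of Proposition \ref{decomposition}; here I use that the tangent modules of $M$ and $M'$ coincide as $L^\infty$-modules, so $\dim_{\rm loc}$ is unchanged. Writing $a_{ij}=\H_f(e_i,e_j)$, $p_i=\la\nabla f,e_i\ra$, $q_i=\la\nabla w,e_i\ra$ and $c=\Gamma(f,w)=\sum_k p_kq_k$, the identity above gives $\H'_f(e'_i,e'_j)=e^{-2w}(a_{ij}-p_iq_j-p_jq_i+\delta_{ij}c)$, hence $|\H'_f|^2_{\rm HS}=e^{-4w}\sum_{i,j}(a_{ij}-p_iq_j-p_jq_i+\delta_{ij}c)^2$ and $\tr\H'_f=e^{-2w}\sum_i(a_{ii}-2p_iq_i+c)$. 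Expanding the square and using $\sum_{ij}a_{ij}^2=|\H_f|^2_{\rm HS}$, $\sum_ip_i^2=\Gamma(f)$, $\sum_iq_i^2=\Gamma(w)$, $\sum_i\delta_{ii}=\dim_{\rm loc}$, $\sum_{ij}a_{ij}p_iq_j=\H_f(\nabla f,\nabla w)$ and $\sum_ia_{ii}=\tr\H_f$ produces both right-hand sides directly.

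The one genuinely non-routine ingredient, and the step I expect to be the main point, is the identity $\Gamma(w,\Gamma(f))=2\H_f(\nabla f,\nabla w)$, which converts the cross term $-4\H_f(\nabla f,\nabla w)$ thrown up by the expansion into the $-2\Gamma(w,\Gamma(f))$ appearing in the statement. This follows by specialising the definition of the Hessian to $g=w,\ h=f$: the two terms $\Gamma(f,\Gamma(f,w))$ and $\Gamma(f,\Gamma(w,f))$ cancel, leaving $2\H_f(\nabla w,\nabla f)=\Gamma(w,\Gamma(f))$. With this substitution the $\Gamma(f,w)^2$ contributions collect as $(\dim_{\rm loc}-2)\Gamma(f,w)^2$ in the norm and $(\dim_{\rm loc}-2)\Gamma(f,w)$ in the trace, matching the claim. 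The only points requiring care are the book-keeping of the $e^{-2w}$ factors under the rescaled frame and the requirement that every carré du champ term lie in $W^{1,2}$, which is guaranteed by $f,w\in{\rm TestF}(M)={\rm TestF}(M')$ via Proposition \ref{lemma-2} and Lemma \ref{self-improve}.
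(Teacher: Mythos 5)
Your proposal is correct and follows essentially the same route as the paper's proof: substitute $\Gamma'=e^{-2w}\Gamma$ into the defining identity for the Hessian, use the Leibniz and chain rules to extract the correction terms $-\Gamma(g,w)\Gamma(f,h)-\Gamma(h,w)\Gamma(f,g)+\Gamma(f,w)\Gamma(g,h)$, contract against the rescaled orthonormal frame $e'_i=e^{-w}e_{i,n}$ to get $(\H'_f)_{ij}=e^{-2w}\big((\H_f)_{ij}-w_if_j-w_jf_i+\Gamma(f,w)\delta_{ij}\big)$, and expand. Your explicit justification of the cross-term conversion via $\Gamma(w,\Gamma(f))=2\H_f(\nabla f,\nabla w)$ is a detail the paper leaves implicit in its final expansion, and your intermediate frame-free bilinear identity for $\H'_f(X,Y)$ is exactly the formula the paper records separately as Corollary \ref{Cov}, so these are presentational refinements rather than a different argument.
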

\begin{proof}
Let $g, h$ be arbitrary test functions. By direct computation, we  have
\begin{eqnarray*}
&&\H'_f(\nabla' g, \nabla' h)\\
&=&\frac12 \big{(}\Gamma'(g, \Gamma'(f,h))+\Gamma'(h, \Gamma'(f,g))-\Gamma'(f, \Gamma'(g,h)) \big{)}\\
&=&  \frac{e^{-2w}}2 \big{(}\Gamma(g, e^{-2w}\Gamma(f,h))+\Gamma(h, e^{-2w}\Gamma(f,g))-\Gamma(f, e^{-2w}\Gamma(g,h)) \big{)}\\
&=& \frac{e^{-4w}}2\big{(}\Gamma(g, \Gamma(f,h))+\Gamma(h, \Gamma(f,g))-\Gamma(f, \Gamma(g,h))\\
&&~~~~~~~~-2\Gamma(g,w)\Gamma(f,h)-2\Gamma(h,w)\Gamma(f,g)+2\Gamma(f,w)\Gamma(g,h)\big{)}\\
&=& e^{-4w}\big{(} \H_f(\nabla g, \nabla h)-\Gamma(g,w)\Gamma(f,h)-\Gamma(h,w)\Gamma(f,g)+\Gamma(f,w)\Gamma(g,h)\big{)}\\
&=& e^{-4w}\big{(} \H_f(\nabla g, \nabla h)-\la \nabla g, \nabla w \ra \la \nabla f, \nabla h \ra -\la \nabla h, \nabla w \ra \la \nabla f, \nabla g \ra \\
&&~~~~~~~~+\la \nabla f, \nabla w \ra \la \nabla g, \nabla h\ra \big{)}.
\end{eqnarray*}

Then we replace $g, h$ by linear combinations of test functions in the equalities above. On one hand,  we  replace $\nabla'g$ by $\sum_i \nabla'g_i$, and  $\nabla' h$ by $\sum_j \nabla' h_j$ in $\H'_f(\nabla' g, \nabla' h)$. Then by approximation and the continuity of Hessian as a bilinear map from $[L^2(TM)]^2$ to $L^1(M)$, we can replace $\nabla'g, \nabla' h$ by $e'_i, e'_j$ where $\{ e'_i \}_i$ is a unit orthogonal base on $M$ with respect to $\Gamma'(\cdot, \cdot)$. On the other hand,  from Lemma \ref{lemma-3} we know that $\nabla g$ and $\nabla h$ should be simultaneously  replaced by $e^w e_i$ and $e^w e_j$ where $\{e_i\}_i$ is the corresponding unit orthogonal base with respect to $\Gamma(\cdot, \cdot)$. Therefore we obtain
\[
(\H'_f)_{ij}=e^{-2w}\big{(}(\H_f)_{ij}-w_if_j-w_jf_i+\Gamma(f,w)\delta_{ij} \big{)},
\]
$\mm$-a.e., where we keep the notion $(T)_{ij}=T(e_i, e_j)$ for a bilinear map $T$ and $f_i=\la \nabla f, e_i \ra$ for a function $f$.
Then we have
\begin{eqnarray*}
|\H'_f|^2_{\rm HS}&=&\mathop{\sum}_{i,j}(\H'_f)^2_{ij}\\
&=& e^{-4w}\mathop{\sum}_{i,j}\big{(}(\H_f)_{ij}-w_if_j-w_jf_i+\Gamma(f,w)\delta_{ij} \big{)}^2\\
&=& e^{-4w}\big{(}|\H_f|^2_{\rm HS} +2\Gamma(f)\Gamma(w)+({\dim_{\rm loc}}-2) \Gamma(f,w)^2\\
&&~~~-2\Gamma(w,\Gamma(f))+2\Gamma(f,w)\tr \H_f \big{)},
\end{eqnarray*}
 $\mm$-a.e..

In the same way, we can prove
\[
\tr \H'_f(x)=\mathop{\sum}_{i=j} (\H'_f)_{ij}=e^{-2w} \big{(} \tr \H_f-2\Gamma(f,w)+{\dim_{\rm loc}}(x) \Gamma(f,w) \big{)}
\]
for $\mm$-a.e. $x\in X$.
\end{proof}
\bigskip
In \cite{G-N}, Gigli  defines the space $W^{1,2}_H(TM)$ which is the closure of test vectors with respect to an appropriate Sobolev norm.  For any vector $X \in W^{1,2}_H(TM)$,  the notion of `covariant derivative' $\nabla X$ is a well defined bounded bilinear map from $[L^2(TM)]^2$ to $L^1(M)$. It has been proven that  $\H_f=\nabla \nabla f$ for any test function $f$.  Since the map $X \to \nabla X$ is continuous in $W^{1,2}_H(TM)$ , by density and linearity  we  can extend the transformation formula for Hessian in the following corollary.

\begin{corollary}[Covariant derivative under conformal transformation]\label{Cov}
Let $X \in W^{1,2}_H(TM)$. Then 
\begin{eqnarray*}
\nabla' X:'( Y \otimes Z)
&=& \nabla X:( Y \otimes Z)-\la Y, \nabla w \ra \la X, Z \ra -\la Z, \nabla w \ra \la X, Y \ra \\
&&~~~~~~~~+\la X, \nabla w \ra \la Y, Z\ra 
\end{eqnarray*}
\end{corollary}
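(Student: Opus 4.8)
The plan is to prove the identity first on the generating family $\{\nabla f:f\in{\rm TestF}(M)\}$, where it is essentially a restatement of the Hessian computation, then to observe that the correction term is tensorial (hence $L^\infty$-linear in $X$), and finally to propagate the identity to all of $W^{1,2}_H(TM)$ by linearity, density of test vectors and the continuity of $X\mapsto\nabla X$ recalled just before the statement.

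First I would extract from the proof of Proposition \ref{Hessian} the pointwise relation between the two Hessians. There the $L^\infty$-bilinearity of $\H'_f$ was used to evaluate $\H'_f$ and $\H_f$ on one and the same pair of vectors, which removes the common factor $e^{-4w}$ and yields, for every $f\in{\rm TestF}(M)$ and all $Y,Z$,
\[
\H'_f(Y,Z)=\H_f(Y,Z)-\la Y,\nabla w\ra\la\nabla f,Z\ra-\la Z,\nabla w\ra\la\nabla f,Y\ra+\la\nabla f,\nabla w\ra\la Y,Z\ra ,
\]
all inner products being those of $M$. Using $\H_f=\nabla\nabla f$ identifies the leading term with $\nabla(\nabla f)\colon(Y\otimes Z)$, i.e. with the case $X=\nabla f$ of the asserted formula; the identification of the left-hand side with $\nabla'(\nabla f)\colon(Y\otimes Z)$ is where the rescalings $\nabla'f=e^{-2w}\nabla f$ and $\la\cdot,\cdot\ra_{M'}=e^{2w}\la\cdot,\cdot\ra$ of Proposition \ref{lemma-3} have to be fed in so that the exponential prefactors cancel and leave the clean expression above.

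Next I would pass from single gradients to test vectors $X=\sum_i a_i\nabla f_i$ with $a_i\in L^\infty(M)$ and $f_i\in{\rm TestF}(M)$. The structural point is that the correction
\[
T(X,Y,Z):=-\la Y,\nabla w\ra\la X,Z\ra-\la Z,\nabla w\ra\la X,Y\ra+\la X,\nabla w\ra\la Y,Z\ra
\]
is manifestly $L^\infty$-linear in $X$, whereas $\nabla'$ and $\nabla$ individually only obey a Leibniz rule. Since the differential of a function is the \emph{same} element of the common cotangent module of $M$ and $M'$ (Proposition \ref{lemma-1}), only its pointwise norm changing, the first-order Leibniz terms $\d a_i\otimes\nabla f_i$ produced by $\nabla'$ and by $\nabla$ coincide and cancel in the difference. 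Hence $X\mapsto\nabla'X-\nabla X$ is itself $L^\infty$-linear, and the base case together with linearity gives $\nabla'X\colon(Y\otimes Z)-\nabla X\colon(Y\otimes Z)=T(X,Y,Z)$ on the dense set of test vectors. Finally I would close by approximation: because $\Gamma'=e^{-2w}\Gamma$ with $w\in{\rm TestF}(M)$ bounded together with its gradient, the spaces $W^{1,2}_H(TM)$ and $W^{1,2}_H(TM')$ coincide with comparable norms, so $\nabla'X$ is defined for every $X\in W^{1,2}_H(TM)$ and both $X\mapsto\nabla X$ and $X\mapsto\nabla'X$ are continuous; since $T(\cdot,Y,Z)$ depends on $X$ continuously (through the pointwise inner products, controlled by $|\nabla w|\in L^\infty$), the identity upgrades to all of $W^{1,2}_H(TM)$.

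I expect the main obstacle to be the bookkeeping of the two metric structures rather than any hard analysis. Concretely, one must check that the exponential factors arising from $\nabla'f=e^{-2w}\nabla f$ and from the rescaled pointwise product cancel exactly so that no prefactor survives, and—more delicately—that the difference of the two covariant derivatives is genuinely tensorial in the module sense, which amounts to verifying $W^{1,2}_H(TM)=W^{1,2}_H(TM')$ and that $\nabla'-\nabla$ annihilates the Leibniz terms. Once this tensoriality is secured, the linearity-and-density extension is routine.
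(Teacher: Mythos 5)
Your proposal is correct and follows essentially the same route as the paper: the paper's entire proof consists of the remark preceding the corollary, namely that $\H_f=\nabla\nabla f$ for test $f$, that test vectors are dense in $W^{1,2}_H(TM)$, and that $X\mapsto\nabla X$ is continuous, so the Hessian transform formula of Proposition \ref{Hessian} extends ``by density and linearity.'' Your one addition --- verifying that the correction term is $L^\infty$-tensorial and that the Leibniz terms $\d a(Y)\la X,Z\ra$ cancel in the difference (because $\d a$ is metric-independent while the $e^{-2w}$ rescaling of the gradient and the $e^{2w}$ rescaling of the pointwise product offset each other) --- is exactly the step the paper leaves implicit, and it is needed since test vectors are $L^\infty$-combinations of gradients, so plain linearity alone would not suffice.
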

%%%%%%%%%%%%% Ricci-æ²ççåæ¢å¬å¼
\bigskip
\begin{theorem}[Ricci tensor under conformal transformation]\label{th-conformal}
Let $M=\ms$ be a $\rcdkn$ metric measure space,  $w,v\in {\rm TestF}(M)$. The  metric measure space constructed through conformal transformation is
$M'=(X,\d',\mm')$,  where $\d'=e^{w}\d$ and $\mm'=e^{v}\mm$. Then for any $N' \in \R$, the $N'$-Ricci tensor on $M'$ can be computed in the following way:
\begin{eqnarray*}
&&{\bf Ricci}'_{N'}(\nabla' f, \nabla' f)\\
&=&e^{v-4w} {\bf Ricci}_{N'}(\nabla f, \nabla f)\\
&&+e^{v-4w} \Big{(}\Gamma(w,f)\big(\frac{(2-\dim_{\rm loc})(N'-2)}{N'-\dim_{\rm loc}} \Gamma(w,f)+2\Delta f+2\Gamma(v-2w,f)\\
&&-2\tr \H_f-\frac{2(\Delta f-\tr \H_f)(2-\dim_{\rm loc})}{N'-\dim_{\rm loc}}\big )\\
&&-\H_{v-2w}(\nabla f,\nabla f)
-\Gamma(f) \big{(} \Gamma(v-2w, w)+\Delta w \big{)} \\
&&-\frac{\Gamma(v-2w,f)}{N'-\dim_{\rm loc}}\big( \Gamma(v-2w,f) +2(\Delta f-\tr \H_f)+(4-2\dim_{\rm loc})\Gamma(f,w)\big)
\Big{)}\,\mm.
\end{eqnarray*}
\end{theorem}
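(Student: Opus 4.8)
The plan is to unfold the definition of the $N'$-Ricci tensor on $M'$,
\[
{\bf Ricci}'_{N'}(\nabla' f,\nabla' f)={\bf \Gamma}'_2(f)-\Big(|\H'_f|^2_{\rm HS}+\frac{1}{N'-\dim_{\rm loc}}(\tr\H'_f-\Delta'f)^2\Big)\,\mm',
\]
and to express each of the three ingredients in terms of unprimed quantities. Two of them are already available: Proposition \ref{Hessian} supplies $|\H'_f|^2_{\rm HS}$ and $\tr\H'_f$ (the Hessian depends only on $\Gamma$, hence only on $w$ and not on $v$), and Proposition \ref{lemma-2} supplies $\Delta'f=e^{-2w}(\Delta f+\Gamma(v-2w,f))$. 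Combining these gives $\tr\H'_f-\Delta'f=e^{-2w}\big(\tr\H_f-\Delta f+(\dim_{\rm loc}-2)\Gamma(f,w)-\Gamma(v-2w,f)\big)$. Since $\mm'=e^{v}\mm$ and every Hessian factor carries $e^{-4w}$, the two subtracted terms come out with the common prefactor $e^{v-4w}$; the task is then to show that ${\bf \Gamma}'_2(f)$ produces the same prefactor times ${\bf \Gamma}_2(f)$, plus absolutely continuous corrections.

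The heart of the computation is ${\bf \Gamma}'_2(f)=\tfrac12{\bf \Delta}'\Gamma'(f)-\Gamma'(f,\Delta'f)\,\mm'$. First I would write $\Gamma'(f)=e^{-2w}\Gamma(f)$ and apply Proposition \ref{lemma-2} to the function $e^{-2w}\Gamma(f)$, so that ${\bf \Delta}'\Gamma'(f)=e^{v-2w}\big({\bf \Delta}(e^{-2w}\Gamma(f))+\Gamma(v-2w,e^{-2w}\Gamma(f))\,\mm\big)$. Expanding ${\bf \Delta}(e^{-2w}\Gamma(f))$ by the Leibniz rule for the measure-valued Laplacian, together with the chain rule ${\bf \Delta}(e^{-2w})=-2e^{-2w}{\bf \Delta}w+4e^{-2w}\Gamma(w)\,\mm$ and $\Gamma(e^{-2w},\cdot)=-2e^{-2w}\Gamma(w,\cdot)$, isolates the leading term $e^{v-4w}\cdot\tfrac12{\bf \Delta}\Gamma(f)$ and a collection of absolutely continuous remainders. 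A parallel expansion of $\Gamma'(f,\Delta'f)=e^{-2w}\Gamma(f,\Delta'f)$ isolates $e^{v-4w}\Gamma(f,\Delta f)\,\mm$, so that the two leading contributions assemble into $e^{v-4w}{\bf \Gamma}_2(f)$, carrying with them the (possibly singular) part of ${\bf \Delta}\Gamma(f)$.

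To put the remainders into the stated shape I would rewrite every nested term $\Gamma(\cdot,\Gamma(\cdot))$ as a Hessian, using the two consequences of the Hessian definition: $\Gamma(a,\Gamma(f))=2\H_f(\nabla a,\nabla f)$ (obtained by taking $g=a,\ h=f$ and cancelling the symmetric pair) and $\Gamma(f,\Gamma(a,f))=\H_a(\nabla f,\nabla f)+\tfrac12\Gamma(a,\Gamma(f))$ (obtained from the Hessian of $a$ with $g=h=f$). The second identity, applied with $a=v-2w$ inside $\Gamma'(f,\Delta'f)$, is exactly what produces the term $-\H_{v-2w}(\nabla f,\nabla f)$ in the target formula, while the chain rule for ${\bf \Delta}(e^{-2w})$ and the $\Gamma(v-2w,e^{-2w})$ cross term produce the factor $-\Gamma(f)\big(\Gamma(v-2w,w)+\Delta w\big)$.

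The main obstacle is purely organizational: after factoring out $e^{v-4w}$ throughout, one must add the correction coming from ${\bf \Gamma}'_2(f)$ to the terms of Proposition \ref{Hessian} beyond $|\H_f|^2_{\rm HS}$, and to the correction obtained by expanding $(\tr\H'_f-\Delta'f)^2=A^2+2AB+B^2$, where $A=\tr\H_f-\Delta f$ and $B=(\dim_{\rm loc}-2)\Gamma(f,w)-\Gamma(v-2w,f)$. One then checks that the $A^2$ piece recombines with ${\bf \Gamma}_2(f)$ and $|\H_f|^2_{\rm HS}$ to reconstruct exactly $e^{v-4w}{\bf Ricci}_{N'}(\nabla f,\nabla f)$. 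Tracking the $N'$-dependent denominators through the $2AB+B^2$ expansion — and matching the coefficients of $\Gamma(f,w)^2$, $\Gamma(f,w)\Gamma(v-2w,f)$ and $\Gamma(v-2w,f)^2$ against the stated $\tfrac{(2-\dim_{\rm loc})(N'-2)}{N'-\dim_{\rm loc}}$-type factors — is the one genuinely delicate bookkeeping step; everything else is an application of the Leibniz and chain rules already recorded in the preliminaries.
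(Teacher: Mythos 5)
Your proposal follows essentially the same route as the paper's own proof: expand ${\bf \Gamma}'_2(f)=\tfrac12{\bf \Delta}'\Gamma'(f)-\Gamma'(\Delta' f,f)\,\mm'$ via Proposition \ref{lemma-2} applied to $e^{-2w}\Gamma(f)$ with the Leibniz and chain rules, import $|\H'_f|^2_{\rm HS}$, $\tr\H'_f$ from Proposition \ref{Hessian} and $\Delta' f$ from Proposition \ref{lemma-2}, and regroup so that the $(\tr\H_f-\Delta f)^2$ piece reassembles $e^{v-4w}{\bf Ricci}_{N'}(\nabla f,\nabla f)$ while the cross terms give the stated corrections. Your two Hessian identities $\Gamma(a,\Gamma(f))=2\H_f(\nabla a,\nabla f)$ and $\Gamma(f,\Gamma(a,f))=\H_a(\nabla f,\nabla f)+\tfrac12\Gamma(a,\Gamma(f))$ are exactly the (implicit) step by which the paper's $\tfrac12\Gamma(v-2w,\Gamma(f))-\Gamma(f,\Gamma(f,v-2w))$ terms become $-\H_{v-2w}(\nabla f,\nabla f)$, so the plan is correct as it stands.
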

\begin{proof}
According to the Definition \ref{Ricn}, we firstly compute ${\bf \Gamma}'_2(f)$ for any $f \in {\rm TestF}(M)$. From definition we know
\[
{\bf \Gamma}'_2(f)=\frac12 {\bf \Delta}'(\Gamma'(f))-\Gamma'(\Delta' f, f)\,\mm'.
\]

By Proposition \ref{lemma-1} and Proposition \ref{lemma-2} we have
\begin{eqnarray*}
{\bf \Delta}'(\Gamma'(f))&=& e^{v-2w} \big{(} {\bf \Delta} (e^{-2w}\Gamma(f))+\Gamma(v-2w,e^{-2w}\Gamma(f))\,\mm  \big{)} \\
&=& e^{v-2w} \big{(} {\bf \Delta} (e^{-2w})\Gamma(f)+ 2\Gamma(e^{-2w}, \Gamma(f))\,\mm +e^{-2w} {\bf \Delta}(\Gamma(f)) \\
&&~~~~+e^{-2w}\Gamma(v-2w,\Gamma(f))\,\mm-2e^{-2w}\Gamma(f)\Gamma(v-2w, w)\,\mm \big{)} \\
&=& e^{-4w} \big{(} 4\Gamma(f)\Gamma(w)-2\Delta w \Gamma(f)-4\Gamma(w, \Gamma(f))\\
&&~~~~~+\Gamma(v-2w, \Gamma(f))-2\Gamma(f)\Gamma(v-2w, w) \big{)}\,\mm' +e^{v-4w} {\bf \Delta}(\Gamma(f)),\\
\end{eqnarray*}
and
\begin{eqnarray*}
\Gamma'(\Delta' f, f)&=& e^{-2w} \big{(} \Gamma(f, e^{-2w}(\Delta f + \Gamma(v-2w,f)) \big{)} \\
&=& e^{-4w}  \big{(} \Gamma(f, \Delta f +\Gamma(v-2w,f))  -2\Gamma(w, f)(\Delta f +\Gamma(v-2w,f))\big{)}\\
&=& e^{-4w}  \big{(} \Gamma(f, \Delta f) -2\Gamma(w, f)\Delta f -2\Gamma(w,f)\Gamma(v-2w,f) \\
&&~~~~~~~~~~+\Gamma(f, \Gamma (f, v-2w))\big{)}.
\end{eqnarray*}
Then  we have
\begin{eqnarray*}
{\bf \Gamma}'_2(f)&=& e^{v-4w} \big{(} {\bf \Gamma}_2(f) \big{)} +e^{-4w}\big{(}  2\Gamma(f)\Gamma(w)-\Delta w \Gamma(f) -2\Gamma(w,\Gamma(f))\nonumber \\ 
&&~~~~-\Gamma(f)\Gamma(v-2w, w)+\frac12 \Gamma(v-2w, \Gamma(f)) \\
&&~~~~-\Gamma(f, \Gamma (f, v-2w))+2\Gamma(f,w)\Delta f+2\Gamma(w,f)\Gamma(v-2w,f)  \big{)}\,\mm'.~~~ 
\end{eqnarray*}
By Definition \ref{Ricn}, Proposition \ref{Hessian} and the formula  above we have
\begin{eqnarray*}
&&{\bf Ricci}'_{N'}(\nabla' f, \nabla' f)\\
&:=&{\bf \Gamma}'_2(f) -|\H'_f|^2_{\rm HS}-\frac1{N'-{\dim_{\rm loc}}}(\tr \H'_f -\Delta' f)^2 \big{)} \, \mm'\\
&=& e^{v-4w} \big{(} {\bf \Gamma}_2(f) \big{)} +e^{-4w}\big{(}  2\Gamma(f)\Gamma(w)-\Delta w \Gamma(f) -2\Gamma(w,\Gamma(f))\nonumber \\ 
&&~-\Gamma(f)\Gamma(v-2w, w)+\frac12 \Gamma(v-2w, \Gamma(f)) \\
&&~-\Gamma(f, \Gamma (f, v-2w))+2\Gamma(f,w)\Delta f+2\Gamma(w,f)\Gamma(v-2w,f)  \big{)}\,\mm'\\
&&-e^{-4w}\big{(}|\H_f|^2_{\rm HS}+2\Gamma(f)\Gamma(w)+({\dim_{\rm loc}}-2) \Gamma(f,w)^2\\
&&~ -2\Gamma(w,\Gamma(f))+2\Gamma(f,w)\tr \H_f \big{)}\,\mm'\\
&&-\frac{e^{-4w}}{N'-\dim_{\rm loc}}\big{(} \Delta f-\tr \H_f+(2-\dim_{\rm loc}) \Gamma(f,w)+\Gamma(v-2w, f)\big{)}^2\,\mm'\\
&=& e^{v-4w} \big{(} {\bf \Gamma}_2(f)- |\H_f|^2_{\rm HS}\,\mm -\frac1{N'-\dim_{\rm loc}}( \Delta f-\tr \H_f)^2\,\mm \big{)}\\
&&~+e^{-4w} \Big{(}\Gamma(w,f)\big(\frac{(2-\dim_{\rm loc})(N'-2)}{N'-\dim_{\rm loc}} \Gamma(w,f)+2\Delta f+2\Gamma(v-2w,f)\\
&&~-2\tr \H_f-\frac{2(\Delta f-\tr \H_f)(2-\dim_{\rm loc})}{N'-\dim_{\rm loc}}\big )\\
&&~-\H_{v-2w}(\nabla f,\nabla f)
-\Gamma(f) \big{(} \Gamma(v-2w, w)+\Delta w \big{)} \\
&&~-\frac{\Gamma(v-2w,f)}{N'-\dim_{\rm loc}}\big( \Gamma(v-2w,f) +2(\Delta f-\tr \H_f)+(4-2\dim_{\rm loc})\Gamma(f,w)\big)
\Big{)}\,\mm' \\
&=&e^{v-4w} {\bf Ricci}_{N'}(\nabla f, \nabla f)\\
&&~+e^{-4w} \Big{(}\Gamma(w,f)\big(\frac{(2-\dim_{\rm loc})(N'-2)}{N'-\dim_{\rm loc}} \Gamma(w,f)+2\Delta f+2\Gamma(v-2w,f)\\
&&~-2\tr \H_f-\frac{2(\Delta f-\tr \H_f)(2-\dim_{\rm loc})}{N'-\dim_{\rm loc}}\big )\\
&&~-\H_{v-2w}(\nabla f,\nabla f)
-\Gamma(f) \big{(} \Gamma(v-2w, w)+\Delta w \big{)} \\
&&~-\frac{\Gamma(v-2w,f)}{N'-\dim_{\rm loc}}\big( \Gamma(v-2w,f) +2(\Delta f-\tr \H_f)+(4-2\dim_{\rm loc})\Gamma(f,w)\big)
\Big{)}\,\mm' 
\end{eqnarray*}
which is the result we need.
\end{proof}

\bigskip
As a corollary, we have the non-smooth version of the formula \eqref{eq:conformal}.

\begin{corollary}\label{coro-N}
Let $M=\ms$ be a $\rcdkn$ metric measure space, $e^{w}$ be the conformal factor with $w\in {\rm TestF}(M)$. The corresponding metric measure space under conformal transformation is
$M'=(X,\d',\mm')$ where $\d'=e^{w}\d$ and $\mm'=e^{Nw}\mm$. Then the $N$-Ricci tensor of $M'$ satisfies the following formula:
\begin{eqnarray*}
{\bf Ricci}'_N(\nabla' f, \nabla' f)=e^{(N-4)w}\big{(} \bRicn(\nabla f, \nabla f)&+&[-{\Delta}w-(N-2)\Gamma(w)]\Gamma(f)\,\mm \\
&-&(N-2)[\H_w(\nabla f, \nabla f)-\Gamma(w,f)^2]\,\mm \big{)}.
\end{eqnarray*}
\end{corollary}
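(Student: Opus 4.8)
The plan is to read off Corollary \ref{coro-N} from Theorem \ref{th-conformal} by specializing to the case $v=Nw$ and $N'=N$. Indeed, the transformation $\ms\mapsto(X,e^{w}\d,e^{Nw}\mm)$ is exactly the one treated in Theorem \ref{th-conformal} with weight $v=Nw$, and the object we want is the $N$-Ricci tensor of the target, i.e. we set $N'=N$. Since ${\rm TestF}(M)$ is a vector space, $v=Nw\in{\rm TestF}(M)$ whenever $w\in{\rm TestF}(M)$, so the hypotheses of Theorem \ref{th-conformal} are in force and no new analytic ingredient is needed: the statement reduces to substituting $v=Nw$, $N'=N$ into the general formula and simplifying.

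First I would record the identities produced by the substitution $v=Nw$. The key observation is that $v-2w=(N-2)w$, so by bilinearity of $\Gamma$ and of the Hessian in their arguments and linearity of the Laplacian,
\begin{eqnarray*}
\Gamma(v-2w,f)&=&(N-2)\Gamma(w,f),\\
\H_{v-2w}(\nabla f,\nabla f)&=&(N-2)\H_w(\nabla f,\nabla f),\\
\Gamma(v-2w,w)&=&(N-2)\Gamma(w),\\
e^{v-4w}&=&e^{(N-4)w},
\end{eqnarray*}
the last of which already matches the conformal prefactor appearing in the corollary. In particular the leading term $e^{v-4w}{\bf Ricci}_{N'}(\nabla f,\nabla f)$ becomes $e^{(N-4)w}\bRicn(\nabla f,\nabla f)$, so it only remains to reorganize the large correction bracket of Theorem \ref{th-conformal}.

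Next I would collect the correction terms after substitution, grouping them by type and writing $n:=\dim_{\rm loc}$ for brevity. Three of them are already in final form and reproduce the target: $-\H_{v-2w}(\nabla f,\nabla f)=-(N-2)\H_w(\nabla f,\nabla f)$, together with the pieces $-\Delta w\,\Gamma(f)$ and $-(N-2)\Gamma(w)\Gamma(f)$ coming from $-\Gamma(f)\big(\Gamma(v-2w,w)+\Delta w\big)$. Every remaining term carries a factor $\Gamma(w,f)$, and I expect them to collapse to $(N-2)\Gamma(w,f)^2$ through two cancellations. The terms linear in $\Delta f$ and in $\tr\H_f$ vanish because their coefficients sum to zero, which rests on the identity $(2-n)+(N-2)=N-n$ (the very denominator of the $N$-Ricci tensor); concretely the coefficient of $\Gamma(w,f)\Delta f$ is $2-\frac{2[(2-n)+(N-2)]}{N-n}=0$, and likewise for $\Gamma(w,f)\tr\H_f$. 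The purely quadratic terms combine through $2+\frac{(2-n)-(N-2)-(4-2n)}{N-n}=2-1=1$, leaving exactly $(N-2)\Gamma(w,f)^2$. Reassembling these contributions produces the bracket $\big[-\Delta w-(N-2)\Gamma(w)\big]\Gamma(f)-(N-2)\big[\H_w(\nabla f,\nabla f)-\Gamma(w,f)^2\big]$ of the corollary.

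The main obstacle is not conceptual but bookkeeping: one must track the several terms proportional to $\tfrac1{N-\dim_{\rm loc}}$ and verify the two cancellations above, all of which hinge on the arithmetic identity $(2-\dim_{\rm loc})+(N-2)=N-\dim_{\rm loc}$. The one genuinely delicate point is the set $\{\dim_{\rm loc}=N\}$, where $N-\dim_{\rm loc}$ vanishes. There Proposition \ref{finite dim} gives $\tr\H_f=\Delta f$, and the transformation formulas of Propositions \ref{Hessian} and \ref{lemma-2} then yield $\tr\H'_f=\Delta'f$ as well, so both $\bRicn$ and ${\bf Ricci}'_N$ are well posed with the usual convention that the squared term $\tfrac1{N-\dim_{\rm loc}}(\tr\H_f-\Delta f)^2$ vanishes. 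Moreover the non-squared correction terms carrying $\tfrac1{N-\dim_{\rm loc}}$ have removable singularities, because their combined numerator vanishes identically at $\dim_{\rm loc}=N$ — this is precisely the cancellation of the $\Delta f$- and $\tr\H_f$-linear contributions noted above — so no spurious singularity survives and the identity extends to all of $X$.
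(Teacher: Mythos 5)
Your proposal is correct and follows exactly the route the paper intends: Corollary \ref{coro-N} is obtained by substituting $v=Nw$ and $N'=N$ into Theorem \ref{th-conformal}, and your bookkeeping (the vanishing of the $\Gamma(w,f)\Delta f$ and $\Gamma(w,f)\tr\H_f$ coefficients via $(2-\dim_{\rm loc})+(N-2)=N-\dim_{\rm loc}$, and the quadratic terms collapsing to $(N-2)\Gamma(w,f)^2$) checks out. Your added remark on the set $\{\dim_{\rm loc}=N\}$, where Proposition \ref{finite dim} makes the $\tfrac{1}{N-\dim_{\rm loc}}$ terms removable, is a careful point the paper leaves implicit.
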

\bigskip
We  end this article with two corollaries  concerning  the curvature-dimension condition under comformal transformation.  These results have been  proven in \cite{S-R} for smooth metric measure spaces.

\begin{corollary}\label{coro-conformal}
Let $M$ be a $\rcdkn$ space,  $M'=(X,\d',\mm')$ where $\d'=e^{w}\d$ and $\mm'=e^{Nw}\mm$,  $w \in {\rm TestF}$.  Then $M'$ satisfies  ${\rm RCD}^*(K', N)$ condition if
\begin{eqnarray*}
K':=\mathop{\inf}_{x\in X} e^{-2w}\Big{[} K-{\Delta}w-(N-2)\Gamma(w)
-\mathop{\sup}_{f \in {\rm TestF}(M)}\frac{N-2}{\Gamma(f)}\big{(}\H_w(\nabla f, \nabla f)-\Gamma(w,f)^2\big{)}\Big{]}
\end{eqnarray*}
is a real number.
\end{corollary}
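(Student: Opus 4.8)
The plan is to deduce the statement from the converse (sufficiency) direction of Theorem \ref{th-ricci}, applied to the transformed space $M'$ with the choice $v=Nw$. Accordingly, I first check that $M'$ meets the standing hypotheses of that theorem. Infinitesimal Hilbertianity of $M'$ is granted by Proposition \ref{lemma-1}, the Sobolev-to-Lipschitz property by Proposition \ref{lemma-4}, and the density of ${\rm TestF}(M')$ in $W^{1,2}(M')$ follows from the identifications ${\rm TestF}(M')={\rm TestF}(M)$ and $W^{1,2}(M')=W^{1,2}(M)$ of Propositions \ref{lemma-2} and \ref{lemma-1}, together with the density of ${\rm TestF}(M)$ in $W^{1,2}(M)$ for the $\rcdkn$ space $M$. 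Since the conformal change rescales the pointwise inner product by a positive factor (Proposition \ref{lemma-3}) and replaces $\mm$ by the equivalent measure $\mm'$, local independence and local span are unaffected; hence the local dimension is preserved, $\dim_{\rm loc}$ is the same for $M$ and $M'$, and $\dim_{\rm max}M'=\dim_{\rm max}M\le N$, which is hypothesis (1).

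Next I would verify hypothesis (2), namely $\tr\H'_f=\Delta'f$ $\mm$-a.e.\ on $\{\dim_{\rm loc}=N\}$. Here I combine the trace formula of Proposition \ref{Hessian}, which on this set reads $\tr\H'_f=e^{-2w}\bigl(\tr\H_f+(N-2)\Gamma(f,w)\bigr)$, with the Laplacian formula of Proposition \ref{lemma-2} specialised to $v=Nw$, giving $\Delta'f=e^{-2w}\bigl(\Delta f+(N-2)\Gamma(w,f)\bigr)$. Since $M$ is $\rcdkn$ and the local dimension equals $N$ there, Proposition \ref{finite dim} yields $\tr\H_f=\Delta f$ on $\{\dim_{\rm loc}=N\}$, so the two right-hand sides coincide.

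It remains to establish hypothesis (3), the lower bound $\bRicn'\ge K'$, which is the heart of the argument. I would start from the explicit transformation formula of Corollary \ref{coro-N}, rewrite the target bound $\bRicn'(\nabla'f,\nabla'f)\ge K'|\D f|^2_{M'}\,\mm'$ using $|\D f|^2_{M'}=e^{-2w}\Gamma(f)$ and $\mm'=e^{Nw}\mm$, so that its right-hand side becomes $K'e^{(N-2)w}\Gamma(f)\,\mm$, and then cancel the common factor $e^{(N-4)w}$. Because $\bRicn(\nabla f,\nabla f)\ge K\Gamma(f)\,\mm$ for the $\rcdkn$ space $M$, its singular part is non-negative and all correction terms in Corollary \ref{coro-N} are absolutely continuous; hence, after passing to densities, the inequality to be proved reduces on $\{\Gamma(f)>0\}$ to
\[
K-\Delta w-(N-2)\Gamma(w)-(N-2)\frac{\H_w(\nabla f,\nabla f)-\Gamma(w,f)^2}{\Gamma(f)}\ \ge\ K'e^{2w}\qquad \mm\text{-a.e.}
\]
This is exactly what the definition of $K'$ guarantees: taking the pointwise supremum over $f\in{\rm TestF}(M)$ of the last quotient and then the infimum over $x$ produces precisely the stated $K'$, so the inequality holds for every fixed $f$ at $\mm$-a.e.\ point. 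On $\{\Gamma(f)=0\}$ both sides vanish, and the singular part is absorbed by the non-negativity noted above.

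The main obstacle I anticipate is bookkeeping rather than conceptual: one must carefully justify the reduction to densities as an inequality of measures (isolating the non-negative singular part of $\bRicn$ and using that the correction terms are absolutely continuous), and one must check that the pointwise supremum over the infinite family ${\rm TestF}(M)$ interacts correctly with the $\mm$-a.e.\ inequalities, so that the finiteness assumption on $K'$ is exactly the condition under which the resulting lower bound is a genuine real constant and Theorem \ref{th-ricci} applies to give the ${\rm RCD}^*(K',N)$ condition for $M'$.
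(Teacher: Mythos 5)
Your proposal is correct and takes essentially the same route as the paper's proof: you verify the standing hypotheses and conditions (1)--(2) of Theorem \ref{th-ricci} for $M'$ via Propositions \ref{lemma-1}, \ref{lemma-2}, \ref{lemma-4}, \ref{Hessian} and \ref{finite dim}, and then deduce condition (3) from the transformation formula of Corollary \ref{coro-N} together with the definition of $K'$, exactly as the paper does. Your additional care with the non-negative singular part of $\bRicn$, the set $\{\Gamma(f)=0\}$, and the a.e.\ interpretation of the supremum over ${\rm TestF}(M)$ only makes explicit bookkeeping that the paper leaves implicit.
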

\begin{proof}
 We know that $M'$ is infinitesimally Hilbertian from Lemma \ref{lemma-1}, $M'$ has Sobolev-to-Lipschitz property from Lemma \ref{lemma-4} and ${\rm TestF}(M')$ is dense in $W^{1,2}(M')$ from Lemma \ref{lemma-2}.   It is sufficient to check the conditions (1),(2) in the Theorem \ref{th-ricci}.

(1) By definition and Lemma \ref{lemma-1} we know that the conformal transformation will not change the local/analytic dimension. Hence by Proposition \ref{finite dim} we know ${\rm dim}_{\rm max} M' \leq N$.

(2) Let $f \in {\rm TestF}(M)={\rm TestF}(M')$. It is proved in Lemma \ref{lemma-2} and  Lemma \ref{Hessian} that
\[
\tr \H'_f-\Delta' f=e^{-2w} \big{(} \tr \H_f-{\Delta}f+({\dim_{\rm loc}}-N) \Gamma(f,w) \big{)}.
\]
On the set $\{{\dim}_{\rm loc}M=N\}=\{{\dim}_{\rm loc}M'=N\}$, we know $\tr \H_f={\Delta}f$ by Proposition \ref{finite dim}. Therefore, $\tr \H'_f=\Delta' f$
$\mm$-a.e. on $\{{\dim}_{\rm loc}M'=N\}$.

We can see that 
\begin{eqnarray*}
{\bf Ricci}'_N(\nabla' f, \nabla' f)&=&e^{(N-4)w}\big{(} \bRicn(\nabla f, \nabla f)+[-{\Delta}w-(N-2)\Gamma(w)]\Gamma(f)\,\mm \\
&&-(N-2)[\H_w(\nabla f, \nabla f)-\Gamma(w,f)^2]\,\mm \big{)}\\
 &\geq&K'\Gamma'(f)\,\mm'= K'e^{(N-2)w}|\D f|^2\,\mm
\end{eqnarray*}
 if 
\begin{eqnarray*}
K'=\mathop{\inf}_{x\in X} e^{-2w}\Big{[} K-{\Delta}w-(N-2)\Gamma(w)
-\mathop{\sup}_{f \in {\rm TestF}(M)}\frac{N-2}{\Gamma(f)}\big{(}\H_w(\nabla f, \nabla f)-\Gamma(w,f)^2\big{)}\Big{]}.
\end{eqnarray*}
Therefore, we can apply Theorem \ref{th-ricci} and finish the proof.

\end{proof}

Similarly, we have the following result.

\begin{corollary}\label{coro-conformal-2}
Let $M$ be a $\rcdkn$ space,  $M'=(X,\d',\mm')$ where $\d'=e^{w}\d$ and $\mm'=e^{v}\mm$, $v,w \in {\rm TestF}$.  Then for any $N'>\dim_{\rm max}(M)=\dim_{\rm max}(M')$, $M'$ satisfies the ${\rm RCD}^*(K', N')$ condition in case
\begin{eqnarray*}
K':=\mathop{\inf}_{x\in X} e^{-2w}\Big{[} K+\mathop{\sup}_{f \in {\rm TestF}(M)}\frac{1}{\Gamma(f)\,\d\mm}\big{(}e^{4w-v}{\bf Ricci}'_{N'}(\nabla' f, \nabla' f)-{\bf Ricci}_{N'}(\nabla f, \nabla f)\big{)}\Big{]}
\end{eqnarray*}
is a real number.
\end{corollary}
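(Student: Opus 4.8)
The plan is to verify that $M'$ satisfies the sufficient (converse) direction of Theorem \ref{th-ricci} for the parameters $(K',N')$, exactly in the spirit of the proof of Corollary \ref{coro-conformal}. First I would record the three structural hypotheses needed to invoke that theorem: by Proposition \ref{lemma-1} the space $M'$ is infinitesimally Hilbertian, by Proposition \ref{lemma-4} it enjoys the Sobolev-to-Lipschitz property, and by Proposition \ref{lemma-2} one has ${\rm TestF}(M')={\rm TestF}(M)$, which is dense in $W^{1,2}(M')$. It then remains only to check conditions (1)--(3) of Theorem \ref{th-ricci}.

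For condition (1), the conformal change does not affect the $L^\infty$-module structure of the tangent bundle: by Proposition \ref{lemma-3} one has $\nabla' f = e^{-2w}\nabla f$, so local independence, local spans, and hence the decomposition of Proposition \ref{decomposition} are common to $M$ and $M'$. Consequently $\dim_{\rm loc}$ and $\dim_{\rm max}$ are unchanged, and the standing assumption $N' > \dim_{\rm max}(M) = \dim_{\rm max}(M')$ gives ${\dim}_{\rm max}M' \le N'$. For condition (2), the same strict inequality forces $\mm'(\{\dim_{\rm loc} = N'\}) = 0$, since $N'$ exceeds the maximal attained local dimension; the required identity $\tr \H'_f = \Delta' f$ on $\{\dim_{\rm loc} = N'\}$ therefore holds vacuously.

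The heart of the argument is condition (3), namely ${\bf Ricci}'_{N'}(\nabla' f,\nabla' f) \ge K' |\D f|_{M'}^2\,\mm'$ for every $f \in {\rm TestF}(M')$. Here I would substitute the transformation formula of Theorem \ref{th-conformal}, writing ${\bf Ricci}'_{N'}(\nabla' f,\nabla' f) = e^{v-4w}{\bf Ricci}_{N'}(\nabla f,\nabla f) + e^{v-4w} D_f\,\mm$, with $D_f$ the density of the correction terms. Since $|\D f|_{M'}^2\,\mm' = e^{v-2w}\Gamma(f)\,\mm$ and $e^{v-4w}>0$, the target inequality is equivalent to ${\bf Ricci}_{N'}(\nabla f,\nabla f) + D_f\,\mm \ge K' e^{2w}\Gamma(f)\,\mm$. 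Because $M$ is $\rcdkn$, Theorem \ref{th-ricci} together with the monotonicity of the subtracted term $\frac{1}{N-\dim_{\rm loc}}(\tr\H_f-\Delta f)^2$ in the dimension parameter yields the baseline bound ${\bf Ricci}_{N'}(\nabla f,\nabla f) \ge K\Gamma(f)\,\mm$. Combining this with the definition of $K'$, whose infimum over $x$ and extremum over test functions are arranged so that $K' e^{2w}\Gamma(f) \le K\Gamma(f) + D_f$ holds $\mmae$ for every $f$, closes condition (3), and Theorem \ref{th-ricci} then certifies that $M'$ is ${\rm RCD}^*(K',N')$.

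I expect the main obstacle to be precisely the bookkeeping in condition (3). One must first identify $D_f$ from the long expression in Theorem \ref{th-conformal} and check that the singular parts of the two $N'$-Ricci measures cancel, so that $e^{4w-v}{\bf Ricci}'_{N'}(\nabla' f,\nabla' f) - {\bf Ricci}_{N'}(\nabla f,\nabla f)$ is genuinely absolutely continuous and its density divided by $\Gamma(f)$ is exactly the quantity appearing in the definition of $K'$. Second, one must match the pointwise extremal structure defining $K'$ against the requirement that the curvature inequality hold simultaneously for all $f$, taking the extremum over $f$ in the direction of the worst case. A secondary delicate point is the dimensional monotonicity invoked for the baseline bound when $\dim_{\rm max}(M) < N' < N$: there one cannot simply cite $\rcdkn \subset {\rm RCD}^*(K,N')$ and must instead argue directly from the definition of ${\bf Ricci}_{N'}$, using that $\tr\H_f = \Delta f$ $\mmae$ on $\{\dim_{\rm loc}=N\}$ from Proposition \ref{finite dim}.
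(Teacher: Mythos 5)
Your route is the paper's own: the paper proves Corollary \ref{coro-conformal} in full and disposes of Corollary \ref{coro-conformal-2} with the single word ``Similarly'', and your reconstruction --- infinitesimal Hilbertianity from Proposition \ref{lemma-1}, Sobolev-to-Lipschitz from Proposition \ref{lemma-4}, ${\rm TestF}(M')={\rm TestF}(M)$ dense from Proposition \ref{lemma-2}, then conditions (1)--(3) of Theorem \ref{th-ricci} with Theorem \ref{th-conformal} feeding condition (3) --- is exactly that argument, with $D_f$ your name for the correction density extracted from Theorem \ref{th-conformal}. Two of your refinements are correct and welcome: condition (2) is indeed vacuous here, because $N'>\dim_{\rm max}$ forces $\mm(\{\dim_{\rm loc}=N'\})=0$ (simpler than the $N'=N$ case treated in Corollary \ref{coro-conformal}); and you rightly read the extremum over $f$ in the definition of $K'$ as a worst-case bound, i.e.\ $K'e^{2w}\Gamma(f)\le K\Gamma(f)+D_f$ must hold for \emph{every} $f$, which requires an essential infimum of $D_f/\Gamma(f)$ over $f$ --- in Corollary \ref{coro-conformal} the $\sup$ is correct only because of the minus sign in front of it, and the displayed $\sup$ in the present statement has to be read in that worst-case direction.

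The genuine gap is the baseline bound ${\bf Ricci}_{N'}(\nabla f,\nabla f)\ge K\,\Gamma(f)\,\mm$ on $M$. The monotonicity you invoke, of $N'\mapsto\frac{1}{N'-\dim_{\rm loc}}(\tr \H_f-\Delta f)^2$, gives ${\bf Ricci}_{N'}\ge\bRicn\ge K\,\Gamma(f)\,\mm$ only when $N'\ge N$; for $\dim_{\rm max}(M)<N'<N$, which the statement allows, the inequality reverses, with deficit $\big(\frac{1}{N'-\dim_{\rm loc}}-\frac{1}{N-\dim_{\rm loc}}\big)(\tr \H_f-\Delta f)^2\,\mm$, and this deficit does not appear in the difference $e^{4w-v}{\bf Ricci}'_{N'}-{\bf Ricci}_{N'}$ defining $K'$, since both terms there carry the same parameter $N'$. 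Your proposed repair via Proposition \ref{finite dim} cannot close this: that proposition yields $\tr \H_f=\Delta f$ only on $\{\dim_{\rm loc}=N\}$ (empty when $N\notin\N$) and says nothing on the strata $\{\dim_{\rm loc}<N\}$, precisely where the deficit lives; on a weighted space $\Delta$ is a drift Laplacian and $\tr \H_f-\Delta f$ is generically nonzero there. Concretely, take $w=v=0$ on the model space $([0,\pi],|\cdot|,\sin^{N-1}(t)\,\d t)$, which is ${\rm RCD}^*(N-1,N)$ with $\dim_{\rm max}=1$: then $M'=M$ and $K'=K=N-1$ is a real number, but ${\bf Ricci}_{N'}(\nabla f,\nabla f)=\big(V''-\frac{(V')^2}{N'-1}\big)\Gamma(f)\,\mm$ with $V=-(N-1)\log\sin$, and $V''-\frac{(V')^2}{N'-1}=(N-1)+(N-1)\big(1-\frac{N-1}{N'-1}\big)\cot^2 t$ is unbounded below when $1<N'<N$, so by Theorem \ref{th-ricci} the space is not ${\rm RCD}^*(K',N')$. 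So your argument is complete, and agrees with the paper's intended one, for $N'\ge N$; for $\dim_{\rm max}<N'<N$ the step fails --- and since the corollary itself fails there, you have in fact located a defect of the statement, hidden by the paper's ``Similarly'', rather than one a better proof could remove.
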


%%%%%%%%%%%%%%%%%%%%%%%%%%%%%%%%%%%%%%%%%%%%%%%%%%%%
\def\cprime{$'$} \def\cprime{$'$}

\end{document}